\newtheorem{theorem}{Theorem}[section]
\newtheorem{lemma}{Lemma}[section]
\newtheorem{claim}{Claim}[section]
\newtheorem{problem}{Problem}
\newcommand{\T}{Tur$\acute{\rm a}$n~}
\newcommand{\A}{  \mathcal{A} }
\newcommand{\Sk}{\overrightarrow{S_{k,1}}}
\begin{document}
\title{Extremal oriented graphs avoiding  1-subdivision of an in-star}
\author{Zejun Huang\footnote{Corresponding author.
Email: mathzejun@gmail.com  }, \quad Chenxi Yang}
\date{School of Mathematical Sciences, Shenzhen University, Shenzhen 518060, China}
\maketitle
 \begin{abstract}
     {An oriented graph is a digraph obtained from an undirected graph by choosing an orientation for each edge.
Given a positive integer $n$ and an oriented graph $F$,
the oriented \T  number $ex_{ori}(n,F)$ is the maximum number of arcs in an $F$-free oriented graph of order $n$. In this paper, we investigate the oriented \T number  $ex_{ori}(n, \overrightarrow{S_{k,1}} )$, where $\overrightarrow{S_{k,1}}$
 is the $1$-subdivision of the in-star of order $k+1$.
We determine   $ex_{ori}(n,\overrightarrow{S_{k,1}}) $ for $k=2,3$ as well as the extremal oriented graphs.
For $k\ge 4$, we establish a  lower bound and an upper bound  on  $ex_{ori}(n,\overrightarrow{S_{k,1}})$.
 }

     \vspace{12pt}
     \noindent{{\bf Keywords:} in-star; 1-subdivision; oriented graphs; \T number }

     \noindent{\bf Mathematics Subject Classification:} 05C20; 05C35
 \end{abstract}

\section{Introduction and main results}
All undirected  graphs considered in this paper are finite and simple.
Let $G$   be an undirected   graph. We denote $V(G)$ and $E(G)$  the {\it vertex set} and the {\it edge  set} of  $G$, respectively. We call $|V(G)|$ the {\it order} and $e(G)=|E(G)|$ the {\it size} of $G$.
Similarly, given a digraph $D$, we denote by $V(D)$, $A(D)$, $a(D)$  the {\it vertex set}, the {\it  arc   set} and the {\it size} of $D$, respectively. We denote by $K_r$, $\overrightarrow{K_r}$ and $\overrightarrow{C_r}$  the complete undirected graph, the complete digraph and the directed cycle of order $r$, respectively.

Suppose $D$ is a digraph with  $u, v\in V(D)$. If $uv\in A(D)$, then $u$ is called an {\it in-neighbor} of $v$ and $v$ is called an {\it out-neighbor} of $u$. The set of in-neighbors (out-neighbors) of $u$ is called its {\it in-neighborhood} ({\it out-neighborhood}), whose cardinality is called the {\it in-degree} ({\it out-degree}) of $u$. If all vertices of  $D$ have the same in-degree $k$, we say $D$ is an {\it in-degree $k$-regular digraph}. When $X$ is a vertex set or an arc set of a digraph (or undirected graph) $D$, we use $D[X]$ to denote the subgraph of $D$ induced by $X$.

An oriented graph is a digraph obtained from an undirected graph by choosing an orientation for each edge.  The {\it $1$-subdivision} of a digraph $D$ is the digraph obtained from $D$ by replacing each arc $uv\in \A(D)$  with a directed 2-path $uwv$ along with a new vertex $w$. Denote by  $\overrightarrow{S_{k}}$ the in-star of order $k+1$ and  $\overrightarrow{S_{k,1}}$ the $1$-subdivision of  $\overrightarrow{S_{k}}$. The {\it center} of  $\overrightarrow{S_{k,1}}$
is the same as the center of $\overrightarrow{S_{k}}$; see Figure 1 and Figure 2 for the diagrams of $\overrightarrow{S_{k}}$ and $\overrightarrow{S_{k,1}}$.

\begin{figure}[h]
    \centering
    \begin{minipage}{0.5\textwidth}
    \centering
    \begin{tikzpicture}
       \draw[decorate,decoration={brace,mirror}](-0.25,1)--(-0.25,-1);
       \filldraw (0,1) circle (0.1);
       \filldraw (0,0) circle (0.1);
       \filldraw (0,-1) circle (0.1);
       \filldraw (1,0) circle (0.1);
       \draw[-latex] (0,1)--(0.9,0.1);
       \draw[-latex] (0,0)--(0.9,0);
       \draw[-latex] (0,-1)--(0.9,-0.1);
       \node [left] at (-0.25,0) {$k$};
       \node [above] at (1,0) {$v$};
       \node at (0,-0.5) {$\vdots$};
    \end{tikzpicture}
    \caption{$\protect\overrightarrow{S_k}$ centered at $v$}
    \label{fig1}
    \end{minipage}%
    \begin{minipage}{0.5\textwidth}
    \centering
    \begin{tikzpicture}
       \draw[decorate,decoration={brace,mirror}](-1.25,1)--(-1.25,-1);
       \filldraw (-1,1) circle (0.1);
       \filldraw (-1,0) circle (0.1);
       \filldraw (-1,-1) circle (0.1);
       \filldraw (0,1) circle (0.1);
       \filldraw (0,0) circle (0.1);
       \filldraw (0,-1) circle (0.1);
       \filldraw (1,0) circle (0.1);
       \draw[-latex] (-1,1)--(-0.1,1);
       \draw[-latex] (-1,0)--(-0.1,0);
       \draw[-latex] (-1,-1)--(-0.1,-1);
       \draw[-latex] (0,1)--(0.9,0.1);
       \draw[-latex] (0,0)--(0.9,0);
       \draw[-latex] (0,-1)--(0.9,-0.1);
       \node [left] at (-1.25,0) {$k$};
       \node [above] at (1,0) {$v$};
       \node at (-0.5,-0.5) {$\vdots$};
    \end{tikzpicture}
    \caption{$\protect\overrightarrow{S_{k,1}}$ centered at $v$}
    \label{fig2}
    \end{minipage}
\end{figure}

Two  graphs $G_1 $ and $G_2 $ are {\it  isomorphic}, written $G_1\cong G_2$, if there is a bijection $\sigma: V(G_1)\rightarrow V(G_2)$ such that $uv\in E(G_1)$ if and only if $\sigma(u)\sigma(v)\in E(G_2)$. For  graphs $G$ and $H$, we say $G$ contains a {\it copy} of $H$ if $G$ contains a subgraph isomorphic to $H$. If $G$ contains no copy of $H$, then $G$ is said to be {\it $H$-free}. The same conceptions can be defined for digraphs.

  \T problems on undirected graphs  consider the maximum size of $F$-free graphs of  order $n$ with $F$ being a single graph  or a family of graphs, which have been attracting  a lot of studies since Mantel \cite{WM} and \T \cite{PT} determined the maximum size of triangle-free and $K_{t}$-free graphs of order $n$; see \cite{BB,VN}.  The studies of \T problems on digraphs  were initiated by Brown and Harary \cite{BH}, who determined the maximum sizes of  $\overrightarrow{K_r}$-free digraphs and $\overrightarrow{T_r}$-free digraphs as well as the extremal digraphs, where $\overrightarrow{T_r}$ is a tournament of order $r$. Brown, Erd\H{o}s and  Simonovits \cite{BES,BES2,BES3} presented some asymptotic results on extremal digraphs avoiding a family of digraphs. H$\ddot{\rm a}$ggkvist and Thomassen \cite{HT}, Zhou and Li \cite{ZL} characterized the extremal $\overrightarrow{C_k}$-free digraphs.  Bermond,   Germa,   Heydemann and Sotteau \cite{BGHS} determined the maximum size of digraphs avoiding all directed cycles of length at most $r$; Chen and Chang \cite{CC,CC2} studied the same problem by posting additional condition on the minimum in-degree and out-degree.  The authors of \cite{HL1,HL2,HL3,HL4,HZ,ZL1,ZL2,ZL3,ZL4,TES,HW} studies extremal digraphs avoiding certain distinct walks or paths with the same initial vertex and terminal vertex.

  \T problems on digraphs when the host digraphs are oriented graphs are also interesting.    Howalla, Dabboucy and Tout \cite{HDT, HDT2} determined the
maximum size of  oriented digraphs avoiding $k$ directed paths with the same initial vertex and terminal vertex
for $k  =1, 2, 3$. Maurer, Rabinovitch and Trotter \cite{MRT} studied the maximum size of  subgraphs of a transitive tournament which contain at most one directed path from $x$ to $y$ for any two
distinct vertices $x, y$.

  Given a positive integer $n$ and a digraph $F$,
the {\it oriented Tur$\acute{a}$n number} $ex_{ori}(n,F)$ is the maximum size of $F$-free oriented graphs of order  $n$. We denoted by $EX_{ori}(n,F)$
the extremal $F$-free oriented graphs of order $n$ that attain  the size $ex_{ori}(n,F)$.

We are interested in the following problem.

\begin{problem}
Let $n,k$ be positive integers. Determine $ex_{ori}(n,\Sk)$ and $EX_{ori}(n,\Sk)$.
\end{problem}
In this paper, we solve Problem 1 for the cases $k=2$ and $k=3$. For $k\ge 4$, we  establish bounds on  $ex_{ori}(n,\Sk)$.
Our main results states as follows.

\begin{theorem}\label{th1.1}
 Let  $n\geq 16$ be an integer. Then
  \begin{equation}
  \nonumber
  ex_{ori}(n,\overrightarrow{S_{2,1}})=\left\lfloor\frac{(n+1)^2}{4}\right\rfloor.
\end{equation}
Moreover, $D\in EX_{ori}(n,\overrightarrow{S_{2,1}})$ if and only if $V(D)$ has a  partition $V(D)=X\cup Y$ such that
\begin{itemize}
\item[(i)]$|X|\in \{\lfloor(n-1)/2\rfloor,\lceil (n-1)/2\rceil\}$;
 \item[(ii)] $xy\in A(D)$ for all $x\in X,y\in Y$;
\item[(iii)]$D[X]$ is an empty digraph, $D[Y]$ is an in-degree $1$-regular digraph.
\end{itemize}
\end{theorem}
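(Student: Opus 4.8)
The plan is to establish the value by a matching construction together with an extremal degree-counting argument, and to extract the characterization from the equality case.

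\medskip
\noindent\emph{Construction and lower bound.} Take the digraph $D$ described in the statement: $V(D)=X\cup Y$ with $|Y|=t$, all arcs $X\to Y$ present, $D[X]$ empty and $D[Y]$ in-degree $1$-regular. Then $a(D)=|X||Y|+t=(n-t)t+t=t(n-t+1)$. It is $\Ss$-free: every $x\in X$ has in-degree $0$, so the center of any putative copy lies in $Y$; but a vertex $y\in Y$ has exactly one in-neighbour in $Y$, while all its remaining in-neighbours lie in $X$ and have in-degree $0$, so at most one in-neighbour of $y$ can serve as a subdivision vertex, whereas $\Ss$ requires two. Optimising $t(n-t+1)$ over integers gives $\lfloor(n+1)^2/4\rfloor$, attained precisely for $t\in\{\lfloor(n+1)/2\rfloor,\lceil(n+1)/2\rceil\}$, equivalently $|X|\in\{\lfloor(n-1)/2\rfloor,\lceil(n-1)/2\rceil\}$. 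This gives the lower bound and already singles out the candidate extremal family (i)--(iii).

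\medskip
\noindent\emph{Key lemma.} For the upper bound the engine is that a copy of $\Ss$ centered at $v$ is exactly a pair of directed $2$-paths $s_1\to m_1\to v$ and $s_2\to m_2\to v$ with $\{s_1,m_1\}\cap\{s_2,m_2\}=\emptyset$. Hence $\Ss$-freeness forces, for every vertex $v$, the set $B_v$ of arcs whose head lies in $N^-(v)$ to contain no two vertex-disjoint arcs; as a set of underlying edges with matching number at most $1$, $B_v$ is a star or a triangle. Since every in-arc of an in-neighbour $m\in N^-(v)$ already belongs to $B_v$, in either case at most one head can receive two or more arcs. This yields the clean statement: \emph{in an $\Ss$-free oriented graph, every vertex has at most one in-neighbour of in-degree $\ge 2$.}

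\medskip
\noindent\emph{From the lemma to the bound.} Let $H=\{u:d^-(u)\ge2\}$ with $h=|H|$, and let $M$, $R$ be the sets of vertices of in-degree $1$ and $0$. The lemma gives $|N^-(v)\cap H|\le1$ for all $v$, hence $d^-(v)\le(n-h)+1$ for $v\in H$ and $\sum_{v\in H}d^-(v)\le h(n-h+1)$; adding the $|M|$ arcs into $M$ (and none into $R$) yields $a(D)\le h(n-h+1)+|M|$, already of the correct order $\tfrac14 n^2+O(n)$. The difficulty is to remove this linear error term and reach the exact value $\lfloor(n+1)^2/4\rfloor$, and I expect this to be the main obstacle. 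The crux is controlling the in-degree-$1$ vertices: a refinement of the star analysis shows that if $v\in H$ has several in-neighbours in $M$, they must all share a single common in-neighbour $c_v$ (else two of them give disjoint $2$-paths into $v$), so they form ``fans'' $c_v\to m\to v$. One then bounds the arcs carried by these fans against the bipartite deficiency $|R|\,h-e(R,H)$ and discards the bounded triangle configurations. This is where $n\ge16$ enters: it guarantees that any deviation from the complete bipartite pattern $R\to H$ loses strictly more arcs than the fans and residual configurations can recover.

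\medskip
\noindent\emph{Equality and characterization.} Tracing equality through the refined argument forces $M=\emptyset$, forces $e(R,H)=|R|h$ (so $R\to H$ is complete, giving (ii) with $X=R$, $Y=H$), and forces $h\in\{\lfloor(n+1)/2\rfloor,\lceil(n+1)/2\rceil\}$ (giving (i)). Since $n\ge16$ makes $|X|\ge2$, completeness of $X\to Y$ means any vertex of $Y$ with two in-neighbours in $Y$ would admit two distinct sources in $X$, producing a copy of $\Ss$; hence every vertex of $Y$ has exactly one in-neighbour in $Y$, i.e.\ $D[Y]$ is in-degree $1$-regular (iii), while $D[X]$ is empty because $X$ consists of roots. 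Conversely the construction of the first paragraph shows every such $D$ is extremal, completing the characterization.
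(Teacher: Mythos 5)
Your construction, lower bound, and key lemma (every vertex of an $\overrightarrow{S_{2,1}}$-free oriented graph has at most one in-neighbour of in-degree at least $2$ --- this is the paper's Lemma \ref{lemma2.3}(ii), obtained there via a Hall-type matching argument rather than your star-or-triangle analysis of $B_v$) are all correct and agree with the paper's strategy in outline. The genuine gap is exactly where you flag it: the passage from $a(D)\le h(n-h+1)+|M|$ to the exact value $\lfloor(n+1)^2/4\rfloor$ is asserted, not proved. The ``fan'' observation --- that all in-degree-$1$ in-neighbours of a fixed $v$ share a common in-neighbour $c_v$ --- is true but does not by itself reduce $d^-(v)$ below $(n-h)+1$, since a single $c_v$ can feed every vertex of $M$ into $v$; so the claimed comparison of ``arcs carried by fans'' against the deficiency $|R|h-e(R,H)$ is doing all the remaining work and is never made precise. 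In particular you must handle the configuration where some $y_0\in H$ has $N^-(y_0)=X\cup\{y_1\}$ with $X=M\cup R$ and $D[X]$ non-empty: the paper devotes its entire Case 3 to this, proving separately that then $a(X)=1$, $N^-(y_1)=\{x_1,x_2\}$, $a(y_1,X)=0$, and only then that the total drops below the bound; and in the equality case it needs a further dedicated argument that $[Y,X]=\emptyset$. None of this is recoverable from ``discards the bounded triangle configurations,'' and the role of $n\ge 16$ is a numerical consequence of those explicit inequalities, not something that can be invoked abstractly.

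The paper closes the linear error term by a case split on $\Delta^-$ relative to $s=|X|$ (namely $\Delta^-\le s-1$, $\Delta^-=s$, $\Delta^-=s+1$), using $\sum_{x\in X}d^-(x)\le s$ together with $\sum_{y\in Y}d^-(y)\le(n-s)\Delta^-$; this is sharper than your per-vertex bound $d^-(v)\le(n-h)+1$ precisely because it ties the in-degrees of all of $Y$ to the single parameter $\Delta^-$, and each value of $\Delta^-$ then admits a concrete forbidden-subgraph argument. To complete your proof you would need to carry out an equivalent analysis; as written, the upper bound and the equality characterization both rest on an unproved step.
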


\begin{theorem}\label{th1.2}
  Let  $n\geq 40$ be an integer. Then
  \begin{equation}
  \nonumber
  ex_{ori}(n,\overrightarrow{S_{3,1}})=\left\lfloor\frac{(n+2)^2}{4}\right\rfloor.
\end{equation}
Moreover, $D\in EX_{ori}(n,\overrightarrow{S_{3,1}})$ if and only if $V(D)$ has a  partition $V(D)=X\cup Y$ such that
\begin{itemize}
\item[(i)]$|X|\in \left\{\lfloor n/2\rfloor-1, \lceil n/2\rceil-1\right\}$;
 \item[(ii)] $xy\in A(D)$ for all $x\in X,y\in Y$;
\item[(iii)]$D[X]$ is an empty digraph, $D[Y]$ is an in-degree $2$-regular digraph.
\end{itemize}
\end{theorem}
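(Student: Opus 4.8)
The plan is to prove the two matching bounds $ex_{ori}(n,\Sss)\ge\lfloor(n+2)^2/4\rfloor$ and $ex_{ori}(n,\Sss)\le\lfloor(n+2)^2/4\rfloor$ and then to extract the extremal graphs from the equality case of the upper bound. For the lower bound I would take exactly the construction in (i)--(iii): disjoint sets $X,Y$ with $|X|=a$, $|Y|=n-a$, all arcs from $X$ to $Y$, $D[X]$ empty, and $D[Y]$ an in-degree $2$-regular oriented digraph. This has $a(n-a)+2(n-a)=(n-a)(a+2)$ arcs, which over integers is maximized at $a=\lfloor n/2\rfloor-1$ or $\lceil n/2\rceil-1$, giving $\lfloor(n+2)^2/4\rfloor$. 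The $\Sss$-freeness I would check vertex by vertex, writing $N^-(\cdot)$ for in-neighbourhoods: a vertex of $X$ has in-degree $0$; a vertex $v\in Y$ has as in-neighbours all of $X$ (which have in-degree $0$ and so cannot be the middle vertex of a subdivided arc) together with exactly two vertices of $Y$. Since an embedded $\Sss$ centered at $v$ requires three in-neighbours each possessing an in-neighbour, with pairwise distinct sources, and $v$ has at most two such in-neighbours, none exists.

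For the upper bound the first step is a clean reformulation of $\Sss$-freeness. An embedding of $\Sss$ with center $v$ is precisely three directed paths of length $2$ ending at $v$ that are pairwise disjoint except at $v$. By the vertex form of Menger's theorem (max-flow--min-cut in the split-vertex network), $D$ admits at most two such disjoint $2$-paths into $v$ if and only if there is a set $C_v$ with $|C_v|\le2$ meeting all of them; equivalently, every in-neighbour $w\notin C_v$ of $v$ satisfies $N^-(w)\subseteq C_v$. Thus $D$ is $\Sss$-free iff such a cover $C_v$ exists for every vertex $v$. An immediate consequence, which I would record as a lemma, is that each vertex has at most two in-neighbours of in-degree $\ge3$ (a vertex of in-degree $\ge3$ cannot have $N^-(w)\subseteq C_v$, hence must lie in $C_v$); so the set $B$ of vertices of in-degree $\ge3$ satisfies $\sum_{u\in B}d^+(u)\le2n$, i.e. high-in-degree vertices have small total out-degree.

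The counting then rests on the partition $V=Q\cup R$, where $Q$ is the set of in-degree-$0$ vertices and $q=|Q|$. No arc enters $Q$, so every arc lies in $Q\to R$ or $R\to R$, and the cover lemma gives $d^-(v)\le2+q+\delta(v)$ for each $v\in R$, where $\delta(v)$ counts the vertices of $R$ all of whose in-neighbours lie in $C_v$. Summing over $R$ yields $a(D)\le(n-q)(q+2)+\sum_{v\in R}\delta(v)$, and $(n-q)(q+2)\le\lfloor(n+2)^2/4\rfloor$ since the left side is maximized at $q=(n-2)/2$. The main obstacle is the correction term $\sum_{v\in R}\delta(v)$, which I expect to be the heart of the paper: the naive estimates (bounding arcs into in-degree-$\le2$ vertices, arcs $S\to B$, and arcs within $B$ separately) overshoot the target by a linear term of order $n$, and "hub" configurations—two sources feeding many in-degree-$1$ vertices that in turn feed a common sink—show $\delta(v)$ can be $\Theta(n)$ while $D$ stays $\Sss$-free, so $\delta$ cannot be bounded in isolation. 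The point to be proved is that this concentration trades against the $Q\to R$ count: when $\delta(v)$ is large the effective source set $C_v$ is tiny and forces $q$ small, and a bipartite estimate on the within-$R$ arcs (using that their tails lie in $R$, combined with $\sum_{u\in B}d^+(u)\le2n$) shows the total never exceeds $(n-q)(q+2)$. I anticipate a short case analysis on the sizes and mutual overlaps of the covers $C_v$, with $n\ge40$ used to absorb lower-order terms, ending in $a(D)\le\lfloor(n+2)^2/4\rfloor$.

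Finally, the extremal characterization would follow by tracing equality through the chain above. Equality forces $q\in\{\lfloor n/2\rfloor-1,\lceil n/2\rceil-1\}$, forces $Q\to R$ to be complete bipartite, forces the correction $\sum_{v\in R}\delta(v)$ to vanish (so no arc $w\to v$ with $w\in R$ has $N^-(w)\subseteq C_v$, i.e. there are no within-$R$ arcs of the confined type), and forces each $v\in R$ to have its two within-$R$ in-neighbours equal to $C_v$. Hence $D[R]$ is in-degree $2$-regular, $D[Q]$ is empty with every in-degree $0$, and every arc from $Q$ to $R$ is present; putting $X=Q$ and $Y=R$ recovers exactly (i)--(iii). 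Conversely, every digraph of this form was shown in the first step to be $\Sss$-free with $\lfloor(n+2)^2/4\rfloor$ arcs, completing the proof of Theorem \ref{th1.2}.
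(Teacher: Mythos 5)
Your lower-bound construction and its verification match the paper's Lemma \ref{lemma02.6}, but the upper bound has two genuine gaps, one of which is a false lemma. The Menger/cover reformulation --- ``$D$ admits at most two disjoint directed $2$-paths into $v$ iff some $C_v$ with $|C_v|\le 2$ meets all of them'' --- is incorrect: finding three pairwise disjoint $2$-paths into $v$ is a matching problem in an auxiliary graph that need not be bipartite, so matching number $2$ does not force cover number $2$. The paper's configuration $H_2$ is a concrete counterexample: $v$ has in-neighbours $u_1,u_2,u_3$ forming a directed triangle, each with the same two further in-neighbours $w_1,w_2$; there is no $\overrightarrow{S_{3,1}}$ centered at $v$, yet every vertex set meeting all $2$-paths into $v$ has size at least $3$. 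Consequently your ``immediate consequence'' that every vertex has at most two in-neighbours of in-degree $\ge 3$ is false for general $\overrightarrow{S_{3,1}}$-free oriented graphs --- this is exactly why the paper needs Lemmas \ref{lemma2.3}--\ref{lemma2.5}, which show that three in-neighbours of in-degree $3$ \emph{can} occur but only in the rigid configuration $H_2$, forcing almost all other in-degrees near $v$ to vanish; the bound $d^-_Y(v)\le 2$ is then recovered only for \emph{extremal} $D$ by showing the $H_2$ configuration makes $a(D)$ too small (Claim \ref{claim3.1}). Your framework, as stated, cannot produce that claim.

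The second gap is that the heart of the upper bound is not carried out: you correctly identify that the naive count overshoots by a linear term and that the correction $\sum_{v}\delta(v)$ must be traded against the bipartite count, but you only ``anticipate a short case analysis.'' In the paper this is the bulk of the proof: after isolating the case $D[X]$ empty (where equality is analysed via the ``bad vertex'' argument to force $[Y,X]=\emptyset$), the case $D[X]$ nonempty is split according to $\Delta^-\in\{\le s,\ s+1,\ s+2\}$ with several further subcases, each closed by a separate counting argument using the structural lemmas. None of that is present or replaced by an alternative argument, and the extremal characterization you sketch inherits the dependence on the false cover lemma. So the proposal is not a proof; it needs the correct local structure theory for the $(3,3,3)$ in-degree pattern and an actual execution of the case analysis.
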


\begin{theorem}\label{th1.3}
Let $n, k$ be positive integers with $k\ge 4$ and $n\ge 3k+1$. Then
  $$\left\lfloor \frac{(n+k-1)^2}{4}\right\rfloor \leq ex_{ori}(n,\overrightarrow{S_{k,1}})\leq \left\lfloor\frac{(n+k-1)^2}{4}\right\rfloor+(k-1)n.$$
\end{theorem}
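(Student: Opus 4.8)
I would prove the two inequalities separately, starting with the lower bound, which is a construction mirroring the extremal graphs of Theorems \ref{th1.1} and \ref{th1.2}. The plan is to partition $V(D)=X\cup Y$, put every arc $xy$ with $x\in X,y\in Y$, leave $D[X]$ empty, and make $D[Y]$ an in-degree $(k-1)$-regular oriented graph (for instance the circulant on $Y$ with connection set $\{1,\dots,k-1\}$, which has no digons once $|Y|>2(k-1)$). Since $|Y|\approx (n+k-1)/2\ge 2k$ when $n\ge 3k+1$, such a $D[Y]$ exists. To see $D$ is $\Sk$-free, note a center $v$ needs $k$ vertex-disjoint in-coming $2$-paths, so it needs $k$ \emph{extendable} in-neighbours (in-neighbours of positive in-degree). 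A vertex of $X$ has in-degree $0$, so the center lies in $Y$; its in-neighbours are all of $X$ (in-degree $0$, not extendable) together with exactly $k-1$ vertices of $Y$. Hence at most $k-1$ disjoint $2$-paths reach $v$, and no copy exists. Counting arcs gives $|X||Y|+(k-1)|Y|=|Y|(|X|+k-1)$, which is maximised at $|Y|=\lfloor (n+k-1)/2\rfloor$ with value $\lfloor (n+k-1)^2/4\rfloor$.

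For the upper bound, let $D$ be $\Sk$-free and split $V(D)=Z\cup P$, where $Z$ is the set of in-degree-$0$ vertices and $P=V(D)\setminus Z$. Because no arc enters $Z$, every arc is either from $Z$ to $P$ or lies inside $P$, so $a(D)=e(Z\to P)+a(D[P])\le |Z|\,|P|+a(D[P])$. The whole upper bound then reduces to the single inequality
\begin{equation}\nonumber
a(D[P])\le (2k-1)\,|P|.
\end{equation}
Granting this, write $p=|P|$ to get $a(D)\le p(n-p+2k-1)$, whose maximum over $p$ is $(n+2k-1)^2/4$. A direct computation shows $(n+2k-1)^2/4\le \lfloor (n+k-1)^2/4\rfloor+(k-1)n$ exactly when $n\ge \tfrac{k(3k-2)}{2(k-2)}$, and one checks $\tfrac{k(3k-2)}{2(k-2)}\le 3k+1$ for all $k\ge4$ (equivalently $3k^2-8k-4\ge0$); so the hypothesis $n\ge 3k+1$ suffices.

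The heart of the argument is therefore the bound on $a(D[P])$. The clean embedding fact I would use is: \emph{any oriented graph in which every vertex has in-degree at least $2k$ contains $\Sk$}. Indeed, fix a center $v$, pick $k$ in-neighbours $w_1,\dots,w_k$ as middles, and then greedily choose distinct starts $u_i\in N^-(w_i)$; when $u_i$ is chosen the forbidden set $\{w_1,\dots,w_k,u_1,\dots,u_{i-1}\}$ has size at most $2k-1$ (note $v\notin N^-(w_i)$ since $D$ is oriented), which is strictly smaller than $d^-(w_i)\ge 2k$, so a valid $u_i$ always exists. The remaining task is to deduce from $a(D[P])>(2k-1)|P|$ that a copy of $\Sk$ is present.

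Here lies the main obstacle, and it is genuinely delicate. Unlike undirected degeneracy, a bound on the number of arcs does \emph{not} by itself produce a subdigraph of large minimum in-degree: a transitive tournament is dense yet every subdigraph has a vertex of in-degree $0$. What rescues the situation is that every vertex of $P$ is \emph{extendable}, i.e. has an in-neighbour in $D$ (possibly in $Z$), which can serve as a start vertex even when it is not an in-neighbour within $P$. I would use this to supply starts for a center of large in-degree in $D[P]$, via a Hall/defect-matching argument on the bipartite graph between the in-neighbours of a well-chosen center and their $D$-in-neighbours. The worst case is when these attached in-neighbours are concentrated on a small set of vertices (as in a star-like configuration, where a vertex can have many extendable in-neighbours but only one disjoint $2$-path); controlling this concentration, by choosing the center and middle vertices so that their starts are forced to be distinct, is the step that must be carried out carefully and is where the generous constant $2k-1$ (rather than the optimal $k-1$) is spent.
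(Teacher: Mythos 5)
Your lower bound is correct and is essentially the paper's own construction (Lemma \ref{lemma02.6}): complete bipartite from $X$ to $Y$ with $D[Y]$ an in-degree $(k-1)$-regular circulant, and the same counting. Nothing to add there.

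The upper bound, however, has a fatal gap: the inequality $a(D[P])\le (2k-1)|P|$ on which your entire reduction rests is \emph{false}. Take a single source $z$, disjoint sets $A$ and $B$ with $|A|=|B|=m$, all arcs $z\to a$ for $a\in A$ and all arcs $a\to b$ for $a\in A$, $b\in B$, and nothing else. Every vertex of $A\cup B$ has positive in-degree, so in your notation $Z=\{z\}$ and $P=A\cup B$, giving $a(D[P])=m^{2}=|P|^{2}/4$, which is quadratic in $|P|$ and exceeds $(2k-1)|P|$ as soon as $m>4k-2$ (well within the range $n\ge 3k+1$). Yet this digraph is $\overrightarrow{S_{2,1}}$-free, hence $\overrightarrow{S_{k,1}}$-free for every $k\ge 2$: a center in $B$ has all its in-neighbours in $A$, and every vertex of $A$ has the unique in-neighbour $z$, so even two vertex-disjoint in-coming $2$-paths cannot be found; a center in $A$ has only one in-neighbour, and $z$ has none. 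This is precisely the ``concentration'' phenomenon you flag in your final paragraph, but it does not merely make the Hall/defect-matching step delicate --- it rules out any linear bound on $a(D[P])$, so the decomposition of $V(D)$ into in-degree-zero versus positive-in-degree vertices cannot work, no matter how carefully the starts are chosen.

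The paper avoids this by thresholding the in-degree at $2k-1$ rather than at $1$. Using the Hall-type Lemma \ref{lemma2.2}, it first shows that no vertex $v$ with $d^-(v)\ge k$ has $k$ in-neighbours each of in-degree at least $2k-1$: orientedness forces each vertex of such a $k$-set $S$ to have at most $k-1$ in-neighbours inside $S$, hence at least $k$ outside, and a system of distinct representatives then assembles $\overrightarrow{S_{k,1}}$ centered at $v$. Setting $X=\{u:d^-(u)\le 2k-2\}$ and $Y=\{u:d^-(u)\ge 2k-1\}$, this yields $d^-_Y(u)\le k-1$ and therefore $d^-(u)\le |X|+k-1$ for every $u\in Y$; summing in-degrees over $X$ and $Y$ gives a quadratic in $s=|X|$ whose maximum is exactly $\lfloor (n+k-1)^2/4\rfloor+(k-1)n$. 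Note that the arcs from $X$ into $Y$ remain part of the quadratic main term rather than being absorbed into a linear error, which is the structural point your $Z$-versus-$P$ split misses.
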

We will present some preliminaries in   Section 2 and prove the above theorems in Section 3.
\section{Preliminaries}

Firstly we present some definitions and notations.
Given a vector $\mathbf{x}=(x_1,x_2,\ldots,x_n)$, we rearrange its components in   decreasing order as as $x_{[1]}\ge x_{[2]}\ge \cdots\ge x_{[n]}$. Given two  nonnegative  vectors $\mathbf{x}=(x_1,x_2,\ldots ,x_s)\in \mathbb{R}^{s}$, $\mathbf{y}=(y_1,y_2,\ldots ,y_t)\in \mathbb{R}^t$, if $s\geq t$ and $x_{[i]}\geq y_{[i]}$ for all $1\leq i\leq  t$, then we say $\mathbf{x}$ {\it covers} $\mathbf{y}$,
  denoted  $\mathbf{x} \rhd \mathbf{y}$. We denote by $\mathbf{x}\not\rhd \mathbf{y}$ if $\mathbf{x}$ does not cover $\mathbf{y}$.

The {\it underlying graph} of a digraph $D$ is  the undirected graph obtained by replacing each arc  by an edge with the same ends. A set of vertex disjoint edges (arcs) $ \{u_1v_1,\ldots,u_kv_k\}$ in an undirected graph (digraph) is called a {\it $k$-matching}. The {\it matching number} of an undirected graph (digraph)     is the  maximum cardinality of  matchings in it.

  Let $D$ be a digraph with $E\subseteq A(D)$, and let  $X, Y$ be disjoint subsets of $ V(D)$ with $Y=\{y_1,y_2,\ldots,y_k\}$. Suppose $G$ is the underlying graph of $D$. We will need the following notations:

\begin{itemize}
\item[]\vskip -0.4cm$N_X^-(u)$:  the in-neighborhood    of $u$  from $X$, which is also denoted by $N^-(u)$ when $X=V(D)$;
\item[]\vskip -0.3cm$N_X^+(u)$:    the out-neighborhood      of $u$  from $X$, which is also denoted by $N^+(u)$ when $X=V(D)$;
\item[]\vskip -0.3cm$d_X^-(u)$:  the cardinality of $N_X^-(u)$, which is also denoted by $d^-(u)$ when $X=V(D)$;
\item[]\vskip -0.3cm$d_X^+(u)$:  the cardinality of $N_X^+(u)$, which is also denoted by $d^+(u)$ when $X=V(D)$;
\item[]\vskip -0.3cm$\Delta^-(D)$: the maximum in-degree     of $D$, which is also denoted by $\Delta^-$ when $D$ is clear from the context;
\item[]\vskip -0.3cm$d_X^-(Y)$: $(d^-_X(y_1),d^-_X(y_2),\ldots,d^-_X(y_k))$, which is denoted by $d^-(Y)$ when $X=V(D)$ and $\widetilde{d}^-(Y)$
\item[]\vskip -0.3cm\quad\quad\quad~~when $X=V(D)-Y$;
\item[]\vskip -0.3cm $[X,Y]$: the set of arcs from  $X$ to $Y$,  which also represents the set of edges between   $X$ and $Y$
\item[]\vskip -0.3cm\quad\quad\quad~~in $G$;
\item[]\vskip -0.3cm $a(X,Y)$: the cardinality of $[X,Y]$, which is abbreviated as $a(X)$ when $X=Y$;
\item[]\vskip -0.3cm  $\bar{X}$: the complement of $X$ in $V(D)$, i.e., $V(D)-X$.
\item[]\vskip -0.3cm$N^-(X)$: the set of all in-neighbors of the vertices of $X$ in $D$;
\item[]\vskip -0.3cm$N_G(X)$: the set of all neighbors of the vertices of $X$ in $G$;
\end{itemize}

For convenience, sometimes we simply write $v$ for a singleton $\{v\}$. If a bipartite graph $G$ has a vertex partition $V(G)=X\cup Y$ such that $G[X]$ and $G[Y]$ are empty graphs, then we say $G$ is a $(X,Y)$-bigraph, denoted $G(X,Y)$.

 We will need the following lemmas.

\begin{lemma}\cite{DW}\label{lemma2.1}
  An $(X,Y)$-bigraph $G$ has a matching that saturates $X$ if and only if $|N(S)|\geq |S|$ for all $S\subseteq X$.
\end{lemma}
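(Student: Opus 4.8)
The plan is to prove the two implications separately, treating the forward direction as routine and reserving the real work for the converse, which I would establish by induction on $|X|$. For necessity, suppose $M$ is a matching saturating $X$. Then for any $S\subseteq X$, the $M$-partners of the vertices of $S$ are $|S|$ distinct vertices lying in $N(S)$, so $|N(S)|\ge |S|$; this requires no induction. For sufficiency I would assume Hall's condition $|N(S)|\ge |S|$ holds for every $S\subseteq X$ and induct on $|X|$. The base case $|X|=1$ is immediate, since the condition applied to $S=X$ forces the unique vertex of $X$ to have a neighbor, producing a one-edge matching. For the inductive step with $|X|\ge 2$, I would split into two cases depending on whether the Hall inequalities are always strict on proper subsets.

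In the first case, assume $|N(S)|\ge |S|+1$ for every nonempty proper subset $S\subsetneq X$. I would pick an arbitrary vertex $x\in X$ together with some neighbor $y\in N(\{x\})$, and delete $x$ and $y$ to form a bigraph $G'$ on $(X\setminus\{x\})\cup(Y\setminus\{y\})$. Since deleting $y$ removes at most one vertex from any neighborhood, every nonempty $S\subseteq X\setminus\{x\}$ satisfies $|N_{G'}(S)|\ge |N(S)|-1\ge |S|$, so Hall's condition is inherited by $G'$. By the induction hypothesis $G'$ has a matching saturating $X\setminus\{x\}$, and adjoining the edge $xy$ gives a matching saturating $X$.

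In the remaining case some nonempty proper subset $A\subsetneq X$ is \emph{tight}, meaning $|N(A)|=|A|$. I would decompose $G$ into the bigraph $G_1$ on $A\cup N(A)$ and the bigraph $G_2$ on $(X\setminus A)\cup(Y\setminus N(A))$. For $G_1$, every neighbor of a subset $S\subseteq A$ already lies in $N(A)$, so $N_{G_1}(S)=N(S)$ and Hall's condition carries over verbatim; as $|A|<|X|$, induction yields a matching $M_1$ saturating $A$. For $G_2$, fix $S\subseteq X\setminus A$ and apply the hypothesis to the enlarged set $A\cup S$: noting that $N(A\cup S)$ is the disjoint union of $N(A)$ and $N(S)\setminus N(A)=N_{G_2}(S)$, and using $|N(A)|=|A|$, I get $|A|+|N_{G_2}(S)|=|N(A\cup S)|\ge |A\cup S|=|A|+|S|$, whence $|N_{G_2}(S)|\ge |S|$. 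Since $|X\setminus A|<|X|$, induction furnishes a matching $M_2$ saturating $X\setminus A$. Because $G_1$ and $G_2$ have disjoint vertex sets, $M_1\cup M_2$ is a matching saturating $X$, completing the induction.

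The main obstacle is the tight-set case: while the surplus case is handled by a single edge deletion, the tight case requires splitting the graph and verifying that Hall's condition survives in \emph{both} pieces. The delicate point is checking the condition for $G_2$, where applying the hypothesis to $S$ alone is insufficient and one must instead test it on the combined set $A\cup S$ and exploit the tightness $|N(A)|=|A|$ to cancel the contribution of $A$.
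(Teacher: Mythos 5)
Your proof is correct and complete: it is the classical Halmos--Vaughan induction on $|X|$, with the two standard cases (Hall's inequality strict on every nonempty proper subset, handled by deleting a matched pair $x,y$; or a tight set $A$ with $|N(A)|=|A|$, handled by splitting $G$ into $G_1$ on $A\cup N(A)$ and $G_2$ on $(X\setminus A)\cup(Y\setminus N(A))$). The delicate step you flag --- verifying Hall's condition in $G_2$ by applying the hypothesis to $A\cup S$ and cancelling $|A|$ against $|N(A)|$ --- is carried out correctly, as is the necessity direction. Note, however, that the paper itself offers no proof to compare against: this lemma is Hall's marriage theorem, quoted as a known result from the cited textbook of West [DW]. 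The proof in that source takes a genuinely different route, arguing via $M$-alternating paths: one takes a maximum matching $M$ missing some $x\in X$, considers the vertices reachable from $x$ by $M$-alternating paths, and shows that the reachable portion of $X$ violates Hall's condition (equivalently, one can deduce the theorem from K\"onig's theorem). The alternating-path argument has the advantage of also yielding the max-matching/min-cover duality and an algorithmic procedure, while your induction is more elementary and self-contained, using nothing beyond the definition of a matching; either would serve as a valid substitute for the citation.
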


\begin{lemma}\label{lemma2.2}
Let $D$ be an oriented graph and $v\in V(D)$  with   $\{u_1,u_2,\ldots,u_k\}\subseteq N^-(v)$, $k\ge 2$. If
$\widetilde{d}^-(u_1,u_2,\ldots,u_k)\rhd(k,k-1,\ldots,1)$,
then $D$ contains an $\overrightarrow{S_{k,1}}$ centered at $v$.
\end{lemma}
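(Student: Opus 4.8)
The plan is to reduce the existence of a $\Sk$ centered at $v$ to finding a system of distinct representatives, and then to verify Hall's condition using the covering hypothesis. Recall that a $\Sk$ centered at $v$ consists of $k$ directed $2$-paths $a_iu_iv$ that pairwise meet only at $v$; since the vertices $u_1,\dots,u_k$ are already given as in-neighbors of $v$, embedding $\Sk$ amounts to choosing, for each $i$, an in-neighbor $a_i$ of $u_i$ so that $a_1,\dots,a_k$ are distinct and avoid $\{v,u_1,\dots,u_k\}$. Thus I seek a system of distinct representatives for the family $N_X^-(u_1),\dots,N_X^-(u_k)$, where $X=V(D)-\{u_1,\dots,u_k\}$, which is exactly the quantity controlled by $\widetilde{d}^-(u_1,\dots,u_k)$.

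First I would set up the auxiliary bigraph $H$ with parts $\{u_1,\dots,u_k\}$ and $X$, joining each $u_i$ to every vertex of $N_X^-(u_i)$; a matching of $H$ saturating $\{u_1,\dots,u_k\}$ yields precisely the desired representatives $a_i$. Note that although $v\in X$, we have $v\notin N_X^-(u_i)$ for every $i$: since $u_iv\in A(D)$ and $D$ is oriented (no digons), $v$ is not an in-neighbor of $u_i$. Hence any representative automatically satisfies $a_i\neq v$ and $a_i\notin\{u_1,\dots,u_k\}$, so the resulting $2$-paths $a_iu_iv$ share only the vertex $v$ and form a genuine copy of $\Sk$.

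To produce such a matching I would invoke Hall's theorem (Lemma \ref{lemma2.1}), which requires $\big|\bigcup_{u_i\in S}N_X^-(u_i)\big|\ge|S|$ for every $S\subseteq\{u_1,\dots,u_k\}$. The crux of the argument, and the only nonroutine step, is to derive this neighborhood inequality from the covering hypothesis $\widetilde{d}^-(u_1,\dots,u_k)\rhd(k,k-1,\dots,1)$. I would argue by contradiction: if Hall's condition failed for some $S$ with $|S|=m$, then $\big|\bigcup_{u_i\in S}N_X^-(u_i)\big|\le m-1$, so each such $u_i$ satisfies $\widetilde{d}^-(u_i)\le m-1$, giving at least $m$ indices whose in-degree (into $X$) is at most $m-1$. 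Consequently at most $k-m$ of the values can be $\ge m$, so the $(k-m+1)$-th largest entry of $\widetilde{d}^-(u_1,\dots,u_k)$ is at most $m-1$; but covering forces this entry to be at least $k-(k-m+1)+1=m$, a contradiction. This establishes Hall's condition, the saturating matching exists, and the embedding of $\Sk$ follows.
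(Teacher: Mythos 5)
Your proposal is correct and follows essentially the same route as the paper: both reduce the embedding of $\Sk$ to a matching saturating $\{u_1,\dots,u_k\}$ in the bipartite graph of external in-neighbors and verify Hall's condition from the covering hypothesis via Lemma \ref{lemma2.1}. In fact you spell out the counting step deriving Hall's condition from $\rhd(k,k-1,\dots,1)$, which the paper asserts without detail, and your observation that $v$ cannot be an in-neighbor of any $u_i$ (no digons) matches the paper's exclusion of $v$ from the bipartite graph.
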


\begin{proof}
  Denote by $Y=\{u_1,u_2,\ldots,u_k\}$ and $E=[\bar{Y}\setminus\{v\},Y]$. Suppose $G$ is the underlying graph of $D[E]$. Then $V(G)$ has a bipartition $V(G)=\tilde{Y}\cup Y'$ with  $\tilde{Y}\subseteq \bar{Y}\setminus \{v\}$ and $Y'\subseteq Y$ being
 independent sets in $G$.
  Since $\widetilde{d}^-(u_1,u_2,\ldots,u_k)\rhd (k,k-1,\ldots,1)$,  we have $Y'=Y$ and  $|N_G(X)|\geq |X|$ for all $ X\subseteq Y$.
  Applying Lemma \ref{lemma2.1} on $G(\tilde{Y},Y)$, $G(\tilde{Y},Y)$ has a $k$-matching saturated $Y$. It follows that
  $D$ contains an $\overrightarrow{S_{k,1}}$ centered at $v$.
\end{proof}

\begin{lemma}\label{lemma2.3}
  Let $D$ be an oriented graph and $v\in V(D)$ with $N^-(v)=\{u_1,u_2,\ldots, u_k\}$.
\begin{itemize}
 \item[(i)] If
   $d^-(u_1,u_2,\ldots ,u_k)\rhd (4,3,3)$, then
  $D$ contains an $\overrightarrow{S_{3,1}}$ centered at $v$.
 \item[(ii)] If
   $d^-(u_1,u_2,\ldots ,u_k)\rhd (2,2)$, then
  $D$ contains an $\overrightarrow{S_{2,1}}$ centered at $v$.
  \end{itemize}
\end{lemma}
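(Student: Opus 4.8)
The plan is to deduce both statements from Lemma \ref{lemma2.2} by choosing a suitable set $Y'$ of candidate middle vertices among $N^-(v)$ and then controlling how much in-degree is lost when we pass from the global in-degree $d^-$ to the quantity $\widetilde{d}^-$ (the in-degree measured from $V(D)-Y'$) that Lemma \ref{lemma2.2} actually uses. The key structural observation is that, since each $u_i\to v$ is an arc and $D$ is oriented, $v$ is never an in-neighbor of any $u_i$; hence $\widetilde{d}^-(u_i)$ differs from $d^-(u_i)$ only by the number $a_i:=|N^-(u_i)\cap Y'|$ of in-neighbors of $u_i$ lying inside $Y'$, and because $D$ carries at most one arc per pair we get the global bound $\sum_i a_i\le \binom{|Y'|}{2}$.

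For part (i), since $d^-(u_1,\ldots,u_k)\rhd(4,3,3)$ I would pick three in-neighbors of $v$ whose in-degrees realize the cover, relabel them so that $d^-(u_1)\ge 4$, $d^-(u_2)\ge 3$, $d^-(u_3)\ge 3$, and set $Y'=\{u_1,u_2,u_3\}$. Here $\sum_i a_i\le\binom{3}{2}=3$ and each $a_i\le 2$. The crux, and the one place needing care, is to verify $\widetilde{d}^-(u_1,u_2,u_3)\rhd(3,2,1)$; this is a short counting argument. Every entry is at least $1$ because $\widetilde{d}^-(u_i)=d^-(u_i)-a_i\ge 3-2=1$, and in fact $\widetilde{d}^-(u_1)\ge 4-2=2$ always. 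If the largest entry were below $3$, then all three would be $\le 2$, forcing $a_1=2$ and $a_2,a_3\ge 1$, so $\sum_i a_i\ge 4$, a contradiction. If two entries were $\le 1$, they could only be $u_2,u_3$ (as $\widetilde{d}^-(u_1)\ge 2$), each forcing $a_i=2$ and again $\sum_i a_i\ge 4$, a contradiction. Thus the cover $(3,2,1)$ holds and Lemma \ref{lemma2.2} with $k=3$ produces an $\overrightarrow{S_{3,1}}$ centered at $v$.

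Part (ii) is the same reduction in miniature: choose $u_1,u_2\in N^-(v)$ with $d^-(u_1),d^-(u_2)\ge 2$, set $Y'=\{u_1,u_2\}$, and note $\sum_i a_i\le\binom{2}{2}=1$, so at most one of the two in-degrees drops by $1$. In either case (no arc inside $Y'$, or a single arc) one checks immediately that $\widetilde{d}^-(u_1,u_2)\rhd(2,1)$, and Lemma \ref{lemma2.2} with $k=2$ yields an $\overrightarrow{S_{2,1}}$ centered at $v$. The main obstacle throughout is precisely the distribution of the in-degree loss $\sum_i a_i$ among the chosen vertices; once it is pinned down by the oriented one-arc-per-pair structure, the covering relations follow from the elementary counting above.
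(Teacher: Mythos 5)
Your proof is correct and takes essentially the same approach as the paper: restrict attention to a set $S$ of three (resp.\ two) in-neighbors of $v$ realizing the cover, use the oriented-graph constraints $d^-_S(u_i)\le 2$ and $\sum_i d^-_S(u_i)\le 3$ (resp.\ $\le 1$) to deduce $\widetilde{d}^-(S)\rhd(3,2,1)$ (resp.\ $(2,1)$), and then apply Lemma \ref{lemma2.2}. The only difference is that you write out explicitly the short counting/case analysis that the paper leaves implicit in the step from $(4,3,3)$ to $(3,2,1)$.
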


\begin{proof}
Without loss of generality, we assume $u_1\ge u_2\ge \cdots\ge u_k$.

 (i)~~ Denote by $S=\{u_1,u_2,u_3\}$. Suppose $d^-(u_i)=d_i$ for $i=1,2,3$.
  Since $D$ is an oriented graph, we have
\begin{equation}\label{eq2.1}
d_{S}^-(u_i)\leq 2 \quad{\rm  for \quad} i=1,2,3
 \end{equation}
 and
 \begin{equation}\label{eq2.2}
 d_{S}^-(u_1)+d_{S}^-(u_2)+d_{S}^-(u_3)\leq 3.
 \end{equation}
  Since  $d^-(u_1,u_2,\ldots ,u_k)\rhd (4,3,3)$, we have $\widetilde{d}^-(u_1,u_2,u_3)\rhd (3,2,1)$.
 Applying Lemma \ref{lemma2.2} we get the conclusion.

 (ii)~~The proof of (ii) is similar.
\end{proof}

\begin{lemma}\label{lemma2.4}
  Let $D$ be an $\overrightarrow{S_{3,1}}$-free oriented graph and $v\in V(D)$ with $d^-(v)\geq 3$. Suppose $S=\{u_1,u_2,u_3\}\subseteq N^-(v)$.
\begin{itemize}
 \item[(i)] If  $\widetilde{d}^-(S)=(2,2,2)$,
  then the subgraph of $D$ induced by $\{v,u_1,u_2,u_3\}\cup N^-(S)$ contains a spanning subgraph isomorphic to $H_1$ (Figure \ref{fig3}).
\item[(ii)]If  $d^-(S)=(3,3,3)$,
  then the subgraph of $D$ induced by $\{v,u_1,u_2,u_3\}\cup N^-(S)$  contains a spanning subgraph isomorphic to $H_2$ (Figure \ref{fig4}).
  \end{itemize}
\end{lemma}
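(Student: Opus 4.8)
The plan is to reduce both parts to the failure of Hall's condition in a single bipartite graph attached to $S$, and then to read the forced local structure off the degree hypotheses. Following the construction in the proof of Lemma \ref{lemma2.2}, I would let $G$ be the underlying bipartite graph between $S=\{u_1,u_2,u_3\}$ and $\tilde S:=N^-(S)\setminus S$. Since $u_i\to v$ for each $i$ and $D$ is an oriented graph, $v$ is never an in-neighbor of any $u_i$, so $v\notin \tilde S$; consequently a matching of $G$ saturating $S$ would supply three \emph{distinct} vertices $x_1,x_2,x_3\in\tilde S$ with $x_i\to u_i$, that is, an $\overrightarrow{S_{3,1}}$ centered at $v$. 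As $D$ is $\overrightarrow{S_{3,1}}$-free, no such matching exists, so by Lemma \ref{lemma2.1} some $T\subseteq S$ violates Hall's condition: $|N_G(T)|<|T|$, where $N_G(T)=\bigcup_{u\in T}\bigl(N^-(u)\setminus S\bigr)$. The whole argument then comes down to showing that the only possible violator is $T=S$ and to extracting the structure that this imposes.

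For part (i) ruling out small $T$ is immediate. Each $u_i$ has $\widetilde d^-(u_i)=2$, so any singleton and any pair already have at least two outside in-neighbors; hence no $T$ with $|T|\le 2$ can violate Hall, and the violator must be $T=S$, giving $|N^-(S)\setminus S|\le 2$. Since for each $i$ the set $N^-(u_i)\setminus S$ has exactly two elements and is contained in the set $N^-(S)\setminus S$ of size at most two, this set has size exactly two, say $\{a,b\}$, and $N^-(u_i)\setminus S=\{a,b\}$ for every $i$; thus $a\to u_i$ and $b\to u_i$ for all $i$. Together with the arcs $u_i\to v$ this exhibits the required spanning copy of $H_1$ on $\{v,u_1,u_2,u_3,a,b\}$, which is indeed the whole vertex set $\{v,u_1,u_2,u_3\}\cup N^-(S)$ because the only in-neighbors of $S$ outside $S$ are $a$ and $b$.

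For part (ii) the same dichotomy needs a little more care, and this is where I expect the only real friction. With full in-degrees $d^-(u_i)=3$ and $|S\setminus\{u_i\}|=2$, every $u_i$ still has at least one outside in-neighbor, which kills $|T|=1$; for $|T|=2$ a violation would force $d_S^-(u_i)=d_S^-(u_j)=2$, hence $u_j\to u_i$ and $u_i\to u_j$, a $2$-cycle forbidden in an oriented graph. So again $T=S$ and $|N^-(S)\setminus S|\le 2$. I would then finish with a double count: writing $a(S)$ for the number of arcs inside $D[S]$ (at most $3$), one has $\sum_i\widetilde d^-(u_i)=9-a(S)\ge 6$, while these outside in-neighbors lie in a set of size at most two and so contribute at most $6$. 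Equality forces $a(S)=3$, exactly two outside vertices $a,b$ each dominating all of $u_1,u_2,u_3$, and therefore $d_S^-(u_i)=3-2=1$ for every $i$; an in-degree-$1$-regular tournament on three vertices is a directed triangle. Adjoining this triangle on $\{u_1,u_2,u_3\}$ to the copy of $H_1$ produces $H_2$ on $\{v,u_1,u_2,u_3,a,b\}$. The main obstacle is precisely this coupled counting in (ii), where the arcs inside $S$ and the external arcs must be pinned down simultaneously; it is the orientedness of $D$ (no $2$-cycles) that drives both the $|T|=2$ exclusion and the final in-degree-regularity conclusion.
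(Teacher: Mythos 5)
Your proof is correct and follows essentially the same route as the paper: both arguments reduce to a violation of Hall's condition in the bipartite graph between $S$ and its external in-neighbors, force $|N^-(S)\setminus S|=2$ with both external vertices dominating all of $S$, and in (ii) pin down $d_S^-(u_i)=1$ to obtain the directed triangle. The only cosmetic difference is that in (ii) the paper invokes Lemma \ref{lemma2.2} (via $\widetilde{d}^-(S)\not\rhd(3,2,1)$) and then reduces to part (i), whereas you rerun the Hall analysis directly and close with a double count; both are valid and equivalent in substance.
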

\begin{figure}[h]
    \centering
    \begin{minipage}{0.5\textwidth}
    \centering
    \begin{tikzpicture}
      \filldraw(0,1) circle(0.1);
      \filldraw(0,0) circle(0.1);
      \filldraw(0,-1) circle(0.1);
      \filldraw(1,0) circle(0.1);
      \filldraw(-1,0.5) circle(0.1);
      \filldraw(-1,-0.5) circle(0.1);
      \node [above] at (1,0) {$v$};
      \node [above] at (0,0) {$u_1$};
      \node [above] at (0,1) {$u_2$};
      \node [below] at (0,-1) {$u_3$};
      \draw[-latex](0,1)--(0.9,0.1);
      \draw[-latex](0,0)--(0.9,0);
      \draw[-latex](0,-1)--(0.9,-0.1);
      \draw[-latex](-1,0.5)--(-0.1,1);
      \draw[-latex](-1,0.5)--(-0.1,0);
      \draw[-latex](-1,0.5)--(-0.1,-1);
      \draw[-latex](-1,-0.5)--(-0.1,1);
      \draw[-latex](-1,-0.5)--(-0.1,0);
      \draw[-latex](-1,-0.5)--(-0.1,-1);
    \end{tikzpicture}
    \caption{$H_1$}
    \label{fig3}
    \end{minipage}%
    \begin{minipage}{0.5\textwidth}
    \centering
    \begin{tikzpicture}
      \filldraw(0,1) circle(0.1);
      \filldraw(0.5,0) circle(0.1);
      \filldraw(0,-1) circle(0.1);
      \filldraw(1.5,0) circle(0.1);
      \filldraw(-1,0.5) circle(0.1);
      \filldraw(-1,-0.5) circle(0.1);
      \node [above] at (1.5,0) {$v$};
      \node [above] at (0,1) {$u_1$};
      \node [above] at (0.6,0) {$u_2$};
      \node [below] at (0,-1) {$u_3$};
      \draw[-latex](0,1)--(1.4,0.1);
      \draw[-latex](0.5,0)--(1.4,0);
      \draw[-latex](0,-1)--(1.4,-0.1);
      \draw[-latex](-1,0.5)--(-0.1,1);
      \draw[-latex](-1,0.5)--(0.4,0);
      \draw[-latex](-1,0.5)--(-0.1,-1);
      \draw[-latex](-1,-0.5)--(-0.1,1);
      \draw[-latex](-1,-0.5)--(0.4,0);
      \draw[-latex](-1,-0.5)--(-0.1,-1);
      \draw[-latex](0,1)--(0.4,0.1);
      \draw[-latex](0.5,0)--(0.1,-0.9);
      \draw[-latex](0,-1)--(0,0.9);
    \end{tikzpicture}
    \caption{$H_2$}
    \label{fig4}
    \end{minipage}
\end{figure}
\begin{proof}
 (i) Suppose $E=[\bar{S},S]$ and $G$ is the underlying graph of $D[E]$.
  Since $D$ is $\overrightarrow{S_{3,1}}$-free, $G$ has no  matching saturating $S$.

  Since $\widetilde{d}^-(u_1,u_2,u_3)=(2,2,2)$, we have  $$|N_G(u_1)|=|N_G(u_2)|=|N_G(u_3)|=2.$$
  Since we have $|N_G(S')|=2\geq |S'|$ for all  $ S'\subseteq S$ with $|S'|\leq 2$,
 applying Lemma \ref{lemma2.1}, we have $|N_G(S)|=2$, which leads to $|N^-(S)|=2$.
 Therefore, the subgraph of $D$ induced by $\{v,u_1,u_2,u_3\}\cup N^-(S)$ contains a spanning subgraph isomorphic to $H_1$.

  (ii)~  Since $D$ is an oriented graph and $|S|=3$,
  we have \eqref{eq2.1} and \eqref{eq2.2}. On the other hand,
  since $D$ is $\overrightarrow{S_{3,1}}$-free, applying Lemma  \ref{lemma2.2},
   we have $$\widetilde{d}^-(u_1,u_2,u_3)\not\rhd (3,2,1).$$ Therefore,
  $d^-(u_1)=d^-(u_2)=d^-(u_3)=3$ leads to $d_{S}^-(u_1)=d_{S}^-(u_2)=d_{S}^-(u_3)=1$  and
  $\widetilde{d}^-(S)=(2,2,2)$. By (i), the subgraph $D_1$ of $D$ induced by $\{v,u_1,u_2,u_3\}\cup N^-(S)$ contains a spanning subgraph isomorphic to $H_1$ and a triangle with vertices  $u_1,u_2,u_3$. Hence, $D_1$ contains a spanning subgraph isomorphic to $H_2$.
\end{proof}

\begin{lemma}\label{lemma2.5}
  Let $D$ be an $\overrightarrow{S_{3,1}}$-free oriented graph and $v\in V(D)$. If $d^-(N^-(v))\rhd (3,3,3)$,
  then   $$d^-(N^-(v))=(3,3,3,1,0,0,\ldots ,0) \quad or\quad
 d^-(N^-(v))=(3,3,3,0,0,\ldots ,0).$$
\end{lemma}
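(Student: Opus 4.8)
The plan is to determine the sorted in-degree vector of $N^-(v)$ in three steps: first bound the maximum in-degree, then count the in-neighbours of in-degree exactly $3$, and finally control the remaining in-neighbours.

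First I would show that no in-neighbour of $v$ has in-degree at least $4$. Since $d^-(N^-(v)) \rhd (3,3,3)$, at least three in-neighbours have in-degree at least $3$; were one of them, say $u$, of in-degree at least $4$, then $u$ together with two further such in-neighbours would give a triple with $d^-(\cdot) \rhd (4,3,3)$, so Lemma \ref{lemma2.3}(i) would yield an $\overrightarrow{S_{3,1}}$ centred at $v$, contradicting the hypothesis. Hence every in-neighbour has in-degree at most $3$, and the (at least three) in-neighbours of in-degree at least $3$ in fact have in-degree exactly $3$, so the sorted vector opens with a block of $3$'s.

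Next I would show that exactly three in-neighbours have in-degree $3$. If $u_1,u_2,u_3,u_4$ all had in-degree $3$, then each of the four triples among them has in-degree vector $(3,3,3)$, so Lemma \ref{lemma2.4}(ii) forces every triple to span a directed triangle (the triangle of $H_2$ in Figure \ref{fig4}). Consequently all six pairs are adjacent, i.e. these four vertices induce a tournament in which no triple is transitive; but then every out-degree is at most $1$, while the out-degrees must sum to $\binom{4}{2}=6$, a contradiction. So exactly three in-neighbours, say $u_1,u_2,u_3$, have in-degree $3$; by Lemma \ref{lemma2.4}(ii) they span a directed triangle, which I may label $u_1 \to u_2 \to u_3 \to u_1$, and they have exactly two common external in-neighbours $w_1,w_2$, with $N^-(u_i)$ consisting of $w_1$, $w_2$ and the predecessor of $u_i$ on the triangle. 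Every other in-neighbour of $v$ has in-degree at most $2$, so it remains to prove that, taken together, they contribute at most a single $1$ to the vector.

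The crux is this last step, and the main obstacle is that the covering criterion of Lemma \ref{lemma2.2} is only sufficient, so I must build systems of distinct representatives by hand, exploiting the freedom to choose which triple of middles to use. Writing $x \to y$ for an arc, I would argue the following. (a) No in-neighbour $u \notin \{u_1,u_2,u_3,w_1,w_2\}$ has positive in-degree: given an in-neighbour $x$ of such a $u$, a short case analysis on $N^-(u) \cap \{u_1,u_2\}$ selects a triple $\{u_i,u_j,u\}$ whose three in-neighbourhoods, after deleting the triple, admit distinct representatives drawn from $x$, $w_1$, $w_2$ and the triangle arcs, producing an $\overrightarrow{S_{3,1}}$. (b) If $w_1 \in N^-(v)$ then $d^-(w_1) \le 1$, since for $d^-(w_1)=2$ the triple $\{u_1,u_2,w_1\}$ admits the representatives $u_1 \mapsto u_3$, $u_2 \mapsto w_2$ and $w_1 \mapsto$ an in-neighbour of $w_1$ distinct from $w_2$ (one exists because $u_3 \notin N^-(w_1)$, as $w_1 \to u_3$), again an $\overrightarrow{S_{3,1}}$; the same holds for $w_2$. (c) The vertices $w_1,w_2$ cannot both be positive-in-degree in-neighbours of $v$: if they were, each would have in-degree $1$, say $N^-(w_1)=\{a\}$ and $N^-(w_2)=\{b\}$, and the triple $\{u_1,u_2,w_1\}$ gives an $\overrightarrow{S_{3,1}}$ unless $a=w_2$, while $\{u_1,u_2,w_2\}$ does so unless $b=w_1$; yet $a=w_2$ and $b=w_1$ would force both $w_2 \to w_1$ and $w_1 \to w_2$, impossible in an oriented graph. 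Combining (a)--(c), the in-neighbours other than $u_1,u_2,u_3$ contribute exactly one $1$ or nothing at all, whence $d^-(N^-(v)) = (3,3,3,1,0,\ldots,0)$ or $d^-(N^-(v)) = (3,3,3,0,\ldots,0)$, as claimed.
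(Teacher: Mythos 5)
Your proposal is correct and follows essentially the same route as the paper: Lemma \ref{lemma2.3} pins the top three in-degrees at $3$, Lemma \ref{lemma2.4}(ii) yields the $H_2$-structure with the two common in-neighbours $w_1,w_2$, and your steps (a)--(c) reproduce the paper's case analysis (its subgraphs $H_4$--$H_7$) showing that the remaining in-neighbours of $v$ contribute at most a single $1$. Your intermediate tournament-counting argument that no four in-neighbours can all have in-degree $3$ is correct but logically redundant, since (a)--(c) already force every in-neighbour outside $\{u_1,u_2,u_3\}$ to have in-degree at most $1$.
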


\begin{proof}
Suppose $N^-(v)=\{u_1,\ldots,u_k\}$. Without loss of generality, we may assume $d^-(u_1)\ge d^-(u_2)\ge \cdots\ge d^-(u_k)$.
 Applying Lemma \ref{lemma2.3}, we have  $d^-(u_1,u_2,u_3)\not\rhd (4,3,3).$ Since $\ d^-(u_i)\geq 3$ for $ i=1,2,3$, we get
 \begin{equation}\label{eq2.3}
 d^-(u_1)=d^-(u_2)=d^-(u_3)=3.
 \end{equation}
  Applying Lemma \ref{lemma2.4},  we have $|N^-(u_1,u_2,u_3)|=2$ and   $D[V_1]$  contains a spanning subgraph $D_1$ isomorphic to $H_2$, where   $V_1=\{v,u_1,u_2,u_3\}\cup N^-(u_1,u_2,u_3)$. Without loss of generality, we assume $D_1=H_3$ with $N^-(u_1,u_2,u_3)=\{w_1,w_2\}$; see Figure \ref{fig5}.
Denote by $V_2= \{ u_1,u_2,u_3, w_1,w_2\}$.

Firstly we claim
\begin{equation}\label{eq2.4}
d^-(u)=0 \quad {\rm for~~ all}\quad  u\in N^-(v)\setminus V_2.
\end{equation}
  Otherwise,  suppose $u_4\in N^-(v)\setminus V_2\ne \emptyset$ with $d^-(u_4)\ne0$ and $w_3\in N^-(u_4)$. Notice that $w_3\ne v$.
If $w_3\notin \{u_1,u_2,u_3,w_1,w_2\}$, then $D$ has a subgraph $H_4$; see   Figure \ref{fig6}.
  If $w_3\in \{u_1,u_2,u_3\}$, without loss of generality, we may assume $w_3=u_3$. Then $D$ has a subgraph $H_5$; see   Figure \ref{fig7}.
  If $w_3\in \{w_1,w_2\}$,  without loss of generality, we may assume  $w_3=w_2$. Then $D$ has a subgraph $H_6$; see   Figure \ref{fig8}. Since  $H_i$  contains an $\overrightarrow{S_{3,1}}$ centered at $v$ for $i=4,5,6$, we   get a contradiction.

Combining \eqref{eq2.3} and \eqref{eq2.4}, now we only need to prove that at most one vertex in $N^-(v)\cap\{w_1,w_2\}$ has nonzero in-degree,
which equals one.

 Suppose $w_i\in N^-(v)$ with $i\in \{1,2\}$.
  If $D$ has a vertex $w_3\in N^-(w_i)$ such that $w_3\neq w_{3-i}$, then $D$ has a subgraph $H_7$ (Figure \ref{fig9}), which contains an $\overrightarrow{S_{3,1}}$, a contradiction. Therefore, we have $N^-(w_i)=\{w_{3-i}\}$ or $N^-(w_i)=\emptyset$.
Since $D$ is an oriented graph, $D$ contains at most one vertex in $N^-(v)\cap\{w_1,w_2\}$ with nonzero in-degree, which equals one.
\end{proof}

\begin{figure}[h]
    \centering
    \begin{minipage}[t]{0.33\textwidth}
    \centering
    \begin{tikzpicture}
      \filldraw(0,1) circle(0.1);
      \filldraw(0.5,0) circle(0.1);
      \filldraw(0,-1) circle(0.1);
      \filldraw(1.5,0) circle(0.1);
      \filldraw(-1,0.5) circle(0.1);
      \filldraw(-1,-0.5) circle(0.1);
      \draw[-latex](0,1)--(1.4,0.1);
      \draw[-latex](0.5,0)--(1.4,0);
      \draw[-latex](0,-1)--(1.4,-0.1);
      \draw[-latex](-1,0.5)--(-0.1,1);
      \draw[-latex](-1,0.5)--(0.4,0);
      \draw[-latex](-1,0.5)--(-0.1,-1);
      \draw[-latex](-1,-0.5)--(-0.1,1);
      \draw[-latex](-1,-0.5)--(0.4,0);
      \draw[-latex](-1,-0.5)--(-0.1,-1);
      \draw[-latex](0,1)--(0.4,0.1);
      \draw[-latex](0.5,0)--(0.1,-0.9);
      \draw[-latex](0,-1)--(0,0.9);
      \node[right] at (1.5,0) {$v$};
      \node[above] at (0,1) {$u_1$};
      \node[above] at (0.6,0) {$u_2$};
      \node[below] at (0,-1) {$u_3$};
      \node[left] at (-1,0.5) {$w_1$};
      \node[left] at (-1,-0.5) {$w_2$};
    \end{tikzpicture}
    \caption{$H_3$}
    \label{fig5}
    \end{minipage}%
    \begin{minipage}[t]{0.33\textwidth}
    \centering
    \begin{tikzpicture}
      \filldraw(0,1) circle(0.1);
      \filldraw(0.5,0) circle(0.1);
      \filldraw(0,-1) circle(0.1);
      \filldraw(0,-2) circle(0.1);
      \filldraw(-1,-2) circle(0.1);
      \filldraw(1.5,0) circle(0.1);
      \filldraw(-1,0.5) circle(0.1);
      \filldraw(-1,-0.5) circle(0.1);
      \draw[-latex](0,1)--(1.4,0.1);
      \draw[-latex](0.5,0)--(1.4,0);
      \draw[-latex](0,-1)--(1.4,-0.1);
      \draw[-latex](-1,0.5)--(-0.1,1);
      \draw[-latex](-1,0.5)--(0.4,0);
      \draw[-latex](-1,0.5)--(-0.1,-1);
      \draw[-latex](-1,-0.5)--(-0.1,1);
      \draw[-latex](-1,-0.5)--(0.4,0);
      \draw[-latex](-1,-0.5)--(-0.1,-1);
      \draw[-latex](0,1)--(0.4,0.1);
      \draw[-latex](0.5,0)--(0.1,-0.9);
      \draw[-latex](0,-1)--(0,0.9);
      \draw[-latex](0,-2)--(1.4,-0.1);
      \draw[-latex](-1,-2)--(-0.1,-2);
      \node[right] at (1.5,0) {$v$};
      \node[above] at (0,1) {$u_1$};
      \node[above] at (0.6,0) {$u_2$};
      \node[below] at (0,-1) {$u_3$};
      \node[left] at (-1,0.5) {$w_1$};
      \node[left] at (-1,-0.5) {$w_2$};
      \node[below] at (-1,-2) {$w_3$};
      \node[below] at (0,-2) {$u_4$};
    \end{tikzpicture}
    \caption{$H_4$}
    \label{fig6}
    \end{minipage}%
    \begin{minipage}[t]{0.33\textwidth}
    \centering
    \begin{tikzpicture}
      \filldraw(0,1) circle(0.1);
      \filldraw(0.5,0) circle(0.1);
      \filldraw(0,-1) circle(0.1);
      \filldraw(0,-2) circle(0.1);
      \filldraw(1.5,0) circle(0.1);
      \filldraw(-1,0.5) circle(0.1);
      \filldraw(-1,-0.5) circle(0.1);
      \draw[-latex](0,1)--(1.4,0.1);
      \draw[-latex](0.5,0)--(1.4,0);
      \draw[-latex](0,-1)--(1.4,-0.1);
      \draw[-latex](-1,0.5)--(-0.1,1);
      \draw[-latex](-1,0.5)--(0.4,0);
      \draw[-latex](-1,0.5)--(-0.1,-1);
      \draw[-latex](-1,-0.5)--(-0.1,1);
      \draw[-latex](-1,-0.5)--(0.4,0);
      \draw[-latex](-1,-0.5)--(-0.1,-1);
      \draw[-latex](0,1)--(0.4,0.1);
      \draw[-latex](0.5,0)--(0.1,-0.9);
      \draw[-latex](0,-1)--(0,0.9);
      \draw[-latex](0,-2)--(1.4,-0.1);
      \draw[-latex](0,-1)--(-0,-1.9);
      \node[right] at (1.5,0) {$v$};
      \node[above] at (0,1) {$u_1$};
      \node[above] at (0.6,0) {$u_2$};
      \node[left] at (0,-1) {$u_3(w_3)$};
      \node[left] at (-1,0.5) {$w_1$};
      \node[left] at (-1,-0.5) {$w_2$};
      \node[left] at (0,-2) {$u_4$};
    \end{tikzpicture}
    \caption{$H_5$}
    \label{fig7}
    \end{minipage}%
\end{figure}

\begin{figure}[h]
    \centering
    \begin{minipage}[t]{0.33\textwidth}
    \centering
    \begin{tikzpicture}
      \filldraw(0,1) circle(0.1);
      \filldraw(0.5,0) circle(0.1);
      \filldraw(0,-1) circle(0.1);
      \filldraw(0,-2) circle(0.1);
      \filldraw(1.5,0) circle(0.1);
      \filldraw(-1,0.5) circle(0.1);
      \filldraw(-1,-0.5) circle(0.1);
      \draw[-latex](0,1)--(1.4,0.1);
      \draw[-latex](0.5,0)--(1.4,0);
      \draw[-latex](0,-1)--(1.4,-0.1);
      \draw[-latex](-1,0.5)--(-0.1,1);
      \draw[-latex](-1,0.5)--(0.4,0);
      \draw[-latex](-1,0.5)--(-0.1,-1);
      \draw[-latex](-1,-0.5)--(-0.1,1);
      \draw[-latex](-1,-0.5)--(0.4,0);
      \draw[-latex](-1,-0.5)--(-0.1,-1);
      \draw[-latex](0,1)--(0.4,0.1);
      \draw[-latex](0.5,0)--(0.1,-0.9);
      \draw[-latex](0,-1)--(0,0.9);
      \draw[-latex](0,-2)--(1.4,-0.1);
      \draw[-latex](-1,-0.5)--(-0.1,-1.9);
      \node[right] at (1.5,0) {$v$};
      \node[above] at (0,1) {$u_1$};
      \node[above] at (0.6,0) {$u_2$};
      \node[below] at (0,-1) {$u_3$};
      \node[left] at (-1,0.5) {$w_1$};
      \node[left] at (-1,-0.5) {$w_2(w_3)$};
      \node[below] at (0,-2) {$u_4$};
    \end{tikzpicture}
    \caption{$H_6$}
    \label{fig8}
    \end{minipage}
    \begin{minipage}[t]{0.33\textwidth}
    \centering
    \begin{tikzpicture}
      \filldraw(0,1) circle(0.1);
      \filldraw(0.5,0) circle(0.1);
      \filldraw(0,-1) circle(0.1);
      \filldraw(1.5,0) circle(0.1);
      \filldraw(-1,0.5) circle(0.1);
      \filldraw(-1,-0.5) circle(0.1);
      \filldraw(-2,0.5) circle(0.1);
      \draw[-latex](0,1)--(1.4,0.1);
      \draw[-latex](0.5,0)--(1.4,0);
      \draw[-latex](0,-1)--(1.4,-0.1);
      \draw[-latex](-1,0.5)--(-0.1,1);
      \draw[-latex](-1,0.5)--(0.4,0);
      \draw[-latex](-1,0.5)--(-0.1,-1);
      \draw[-latex](-1,-0.5)--(-0.1,1);
      \draw[-latex](-1,-0.5)--(0.4,0);
      \draw[-latex](-1,-0.5)--(-0.1,-1);
      \draw[-latex](0,1)--(0.4,0.1);
      \draw[-latex](0.5,0)--(0.1,-0.9);
      \draw[-latex](0,-1)--(0,0.9);
      \draw[-latex](-2,0.5)--(-1.1,0.5);
      \draw[-latex](-1,0.5)--(1.4,0.1);
      \node[right] at (1.5,0) {$v$};
      \node[above] at (0,1) {$u_1$};
      \node[below] at (0.6,0) {$u_2$};
      \node[below] at (0,-1) {$u_3$};
      \node[above] at (-1,0.5) {$w_i$};
      \node[left] at (-1,-0.5) {$w_{3-i}$};
      \node[left] at (-2,0.5) {$w_{3}$};
    \end{tikzpicture}
    \caption{$H_7$}
    \label{fig9}
    \end{minipage}%
\end{figure}

\begin{lemma}\label{lemma02.6}
Let  $n$ and $k$ be integers such that $k\ge 2$ and $n\geq 3k+1$. Then
   \begin{equation} \label{eq2.5}
  ex_{ori}(n,\overrightarrow{S_{k,1}})\geq \left\lfloor\frac{(n+k-1)^2}{4}\right\rfloor.
  \end{equation}
\end{lemma}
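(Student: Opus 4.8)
The plan is to exhibit an explicit $\Sk$-free oriented graph $D$ of order $n$ whose size equals $\lfloor (n+k-1)^2/4\rfloor$. Motivated by the extremal graphs in Theorems \ref{th1.1} and \ref{th1.2}, I would partition $V(D)=X\cup Y$ with $|Y|=\lfloor (n+k-1)/2\rfloor$ and $|X|=n-|Y|$, insert every arc from $X$ to $Y$ (so $[X,Y]$ is complete), leave $D[X]$ empty, and let $D[Y]$ be an in-degree $(k-1)$-regular oriented graph. Writing $b=|Y|$, the size is then $a(D)=|X||Y|+(k-1)|Y|=b\bigl((n+k-1)-b\bigr)$, a downward parabola in $b$ whose integer maximum occurs at $b=\lfloor (n+k-1)/2\rfloor$, where it equals $\lfloor (n+k-1)/2\rfloor\,\lceil (n+k-1)/2\rceil=\lfloor (n+k-1)^2/4\rfloor$. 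This already pins down the target count, so the real content is to show that such a $D[Y]$ exists and that $D$ is $\Sk$-free.

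For the existence of $D[Y]$, I would realize it as the circulant digraph on $\{0,1,\dots,b-1\}$ with arc set $\{\,i\to i+j \pmod b : 1\le j\le k-1\,\}$; each vertex then has in-neighbors $i-1,\dots,i-(k-1)$ and out-neighbors $i+1,\dots,i+(k-1)$, so it is in-degree $(k-1)$-regular. It contains no $2$-cycle precisely when $b\ge 2k-1$, since a digon would force $s+t\equiv 0\pmod b$ for some $s,t\in\{1,\dots,k-1\}$, which is impossible when $2\le s+t\le 2k-2<b$. The hypothesis $n\ge 3k+1$ gives $n+k-1\ge 4k$ and hence $b=\lfloor (n+k-1)/2\rfloor\ge 2k>2k-1$, so the construction is legitimate; the same inequality gives $|X|=n-b>0$, so both parts of the partition are nonempty.

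For $\Sk$-freeness, I would compute in-degrees in $D$: every $x\in X$ has $d^-(x)=0$, while every $y\in Y$ has $d^-(y)=|X|+(k-1)$, of which exactly $k-1$ in-neighbors (those inside $Y$) have positive in-degree, the remaining $|X|$ in-neighbors (those in $X$) being in-degree-$0$ sinks. A copy of $\Sk$ centered at a vertex $v$ consists of $k$ distinct in-neighbors $w_1,\dots,w_k$ of $v$ together with $k$ further distinct vertices $u_1,\dots,u_k$ with $u_i\to w_i$; in particular each $w_i$ must have positive in-degree. No $x\in X$ can be a center, since $d^-(x)=0<k$. For $v\in Y$, among the in-neighbors of $v$ only the $k-1$ lying in $Y$ have positive in-degree, so at most $k-1<k$ of them can serve as the subdivision vertices $w_i$; hence no $\Sk$ centered at $v$ exists. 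Therefore $D$ is $\Sk$-free and attains size $\lfloor (n+k-1)^2/4\rfloor$, which establishes \eqref{eq2.5}. The only delicate point is this last verification, where one must exploit that the in-neighbors coming from $X$ are in-degree-$0$ sinks and so cannot be subdivided; everything else reduces to the parabola maximization and the elementary circulant construction.
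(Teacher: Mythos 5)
Your proposal is correct and follows essentially the same route as the paper: the same bipartition $V(D)=X\cup Y$ with all arcs from $X$ to $Y$, $D[X]$ empty, and $D[Y]$ the circulant in-degree $(k-1)$-regular oriented graph, with the count reducing to the parabola maximum $\lfloor (n+k-1)^2/4\rfloor$. You in fact supply more detail than the paper, which asserts $\overrightarrow{S_{k,1}}$-freeness as obvious, whereas you justify it via the observation that at most $k-1$ in-neighbors of any vertex have positive in-degree; this is a welcome addition, not a deviation.
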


\begin{proof}
  Let $D_k$ be an oriented graph with $n$ vertices and $V(D_k)$ has a bipartition $V(D_k)= X\cup Y$ such that
  \begin{itemize}
  \item[(i)]\vskip -6pt$|X|\in \left\{\lfloor (n-k+1)/2\rfloor, \lceil (n-k+1)/2\rceil\right\}$;
  \item[(ii)]\vskip -6pt $xy\in A(D)$ for all $x\in X,y\in Y$;
  \item[(iii)]\vskip -6pt $D[X]$ is an empty graph;
  \item[(iv)]\vskip -6pt $D[Y]$ is an in-degree $(k-1)$-regular oriented graph.
  \end{itemize}

We remark that the above $D_k$ always exists. Since $n\geq 3k+1$, we have $|Y|\ge 2k-1$.  Denote by $Y=\{y_1,y_2,\ldots ,y_t\}$.  Then
  $y_iy_j\in A(D)$ if and only if $i-j~({\rm mod}~ t)\in  \{1,2,\ldots,k-1\}$ guarantees an  in-degree $(k-1)$-regular oriented graph $D[Y]$.

  Obviously, $D_k$ is an $\overrightarrow{S_{k,1}}$-free oriented graph with $\left\lfloor{(n+k-1)^2}/{4}\right\rfloor$ arcs.
  Therefore, we have \eqref{eq2.5}.
\end{proof}

\section{Proofs of the main results}
In this section, we present the proofs of our main results.

\subsection{Proof of Theorem \ref{th1.1}}

 Suppose $D\in EX_{ori}(n,\overrightarrow{S_{2,1}})$ with $n\ge 16$. Partition $V(D)=X\cup Y$ with $$X=\{x\in V(D): d^-(x)\leq 1\}, \quad Y=\{y\in V(D): d^-(y)\geq 2\}.$$
We have the following claim.\\

 \begin{claim}\label{claim2.1}
For the above $X$ and $Y$, we have
 \begin{equation}\label{eq04.1}
 d_{Y}^{-}(y)\leq 1\quad {\rm for~~all\quad} y\in Y,
 \end{equation}
 and
 \begin{equation}\label{eq02.1}
 \frac{n}{2}-\sqrt{\frac{n}{2}}<|X|< \frac{n}{2}+\sqrt{\frac{n}{2}}.
 \end{equation}
 \end{claim}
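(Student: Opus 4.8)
The plan is to establish the two displayed inequalities separately, treating \eqref{eq04.1} first since the bound \eqref{eq02.1} will rely on it.

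For \eqref{eq04.1} I would argue by contradiction through Lemma \ref{lemma2.3}(ii). Suppose some $y\in Y$ had $d_Y^-(y)\ge 2$, so that $y$ has two distinct in-neighbors $y_1,y_2\in Y$. By the definition of $Y$ both satisfy $d^-(y_i)\ge 2$, hence the in-degree vector of $N^-(y)$ covers $(2,2)$, i.e. $d^-(N^-(y))\rhd(2,2)$. Lemma \ref{lemma2.3}(ii) then yields an $\overrightarrow{S_{2,1}}$ centered at $y$, contradicting the $\overrightarrow{S_{2,1}}$-freeness of $D$. Thus $d_Y^-(y)\le 1$ for every $y\in Y$.

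For \eqref{eq02.1} I would count arcs through in-degrees, writing $p=|X|$. Every $x\in X$ has $d^-(x)\le 1$, so the arcs ending in $X$ number at most $p$. Every $y\in Y$ has $d^-(y)=d_X^-(y)+d_Y^-(y)\le p+1$ by \eqref{eq04.1}, so the arcs ending in $Y$ number at most $(n-p)(p+1)$. Adding the two estimates gives
$$a(D)\le p+(n-p)(p+1)=-p^2+np+n.$$
Since $D$ is extremal, Lemma \ref{lemma02.6} (with $k=2$) supplies the matching lower bound $a(D)\ge\lfloor (n+1)^2/4\rfloor$. Combining the two and clearing denominators reduces the problem to $(2p-n)^2\le n^2+4n-4\lfloor (n+1)^2/4\rfloor\le 2n$, which rearranges at once to $|p-n/2|\le\sqrt{n/2}$, the interval in \eqref{eq02.1}.

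The delicate point, and the one I expect to be the main obstacle, is upgrading this to the \emph{strict} inequalities asserted. When $n$ is odd one has $4\lfloor (n+1)^2/4\rfloor=(n+1)^2$, so the estimate sharpens to $(2p-n)^2\le 2n-1<2n$ and strictness is automatic. When $n$ is even the floor only yields $(2p-n)^2\le 2n$, and equality is numerically possible precisely when $2n$ is a perfect square. I would dispose of this case structurally: equality throughout the arc count forces $d^-(x)=1$ for every $x\in X$ and $d_X^-(y)=p$, $d_Y^-(y)=1$ for every $y\in Y$, so (as $D$ has no digons) every vertex of $X$ receives its unique in-arc from within $X$ and $D[X]$ is in-degree $1$-regular with $p\ge 2$ arcs. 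Because $p$ is of order $n/2$ and every vertex of $X$ is an in-neighbor of each $y\in Y$, I can then select two in-neighbors $x_1,x_2\in X$ of a fixed $y$ whose own in-neighbors $a_1,a_2\in X$ are distinct and avoid $\{x_1,x_2,y\}$, producing an $\overrightarrow{S_{2,1}}$ centered at $y$ — a contradiction. Hence equality cannot occur and both inequalities are strict.
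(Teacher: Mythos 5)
Your argument for \eqref{eq04.1} and the main counting step for \eqref{eq02.1} are exactly the paper's: Lemma \ref{lemma2.3}(ii) applied to two in-neighbors from $Y$, and the chain $\lfloor (n+1)^2/4\rfloor \le a(D)\le |X|+(|X|+1)(n-|X|)$ via Lemma \ref{lemma02.6}. Where you genuinely diverge is in extracting the \emph{strict} inequalities: the paper simply asserts $\frac{n^2+2n}{4}<\lfloor\frac{(n+1)^2}{4}\rfloor$ and reads off \eqref{eq02.1}, but that numerical inequality is in fact an equality whenever $n$ is even (e.g.\ $n=16$ gives $72=72$), so the paper's one-line justification only delivers the non-strict bound $|\,|X|-n/2\,|\le\sqrt{n/2}$. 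You correctly isolate this as the delicate point and close it structurally: equality forces $d^-(x)=1$ on $X$, $d^-_X(y)=|X|$ and $d^-_Y(y)=1$ on $Y$, hence (no digons) $D[X]$ is in-degree $1$-regular, which yields a $2$-matching in $D[X]$ and thus a forbidden $\overrightarrow{S_{2,1}}$ centered at any $y\in Y$ --- the same device the paper itself uses later in Case 2 of the proof of Theorem \ref{th1.1}. So your proof is correct and, on the strictness point, actually repairs a small flaw in the paper's argument; the only detail worth spelling out is why an in-degree $1$-regular oriented graph on at least four vertices must contain a $2$-matching (a graph whose edges pairwise intersect is a star or a triangle, and neither can be in-degree $1$-regular here). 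Note also that the downstream uses of the claim only require the non-strict version, so the theorem is unaffected either way.
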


{\it Proof of Claim \ref{claim2.1}.}
By Lemma \ref{lemma2.3}, we have (\ref{eq04.1}).
Since $D \in EX_{ori}(n,\overrightarrow{S_{2,1}})$, by Lemma \ref{lemma02.6}
we have
\begin{equation*}
  \begin{split}
  \frac{n^2+2n}{4}<\left\lfloor \frac{(n+1)^2}{4}\right\rfloor \leq a(D)&\leq \sum_{v\in X}d_X^-(v)+\sum_{v\in Y}d_Y^-(v)\\
  &\leq |X|+(|X|+1)(n-|X|)=-|X|^2+n|X|+n,
\end{split}
\end{equation*}
which leads to \eqref{eq02.1}.\\

By Claim \ref{claim2.1}, we have $\Delta^- \le |X|+1$. Denote by $s=|X|$. We distinguish three cases.

{\it Case 1.} $\Delta^- \leq s-1$. We have
\begin{equation}
\begin{split}
    a(D)&=\sum_{x\in X}d^-(x)+\sum_{y\in Y}d^-(y)\leq s+(s-1)(n-s)\\
    &=-s^2+(n+2)s-n\leq \frac{n^2+4}{4}<\left\lfloor\frac{(n+1)^2}{4}\right\rfloor.
\end{split}
\end{equation}

{\it Case 2.} $\Delta^- =s$. Then we have
\begin{equation}\label{eq4.7}
\begin{split}
    a(D)&=\sum_{x\in X}d^-(x)+\sum_{y\in Y}d^-(y)
    \le s+(n-s)s \le \left\lfloor\frac{(n+1)^2}{4}\right\rfloor.
\end{split}
\end{equation}
Suppose equality in \eqref{eq4.7} holds. Then we have
  $s\in\{\lfloor (n+1)/2\rfloor,\lceil (n+1)/2\rceil\}$ and
\begin{equation}\label{eq4.8}
d^-(x)=1,~~d^-(y)=s\quad {\rm for~~all}\quad x\in X,~~y\in Y.
\end{equation}
If $D[Y]$ is an empty graph. Then $$N^-(v)=X\quad {\rm for~~ all}\quad  v\in Y,$$ which implies $a(X)=s\ge 6$. Therefore, $D[X]$ contains a 2-matching, which leads to an $\overrightarrow{S_{2,1}}$ in $D$, a contradiction.
If $D[Y]$ is not an empty graph, choose an arc $y_1y_2\in D[Y]$. Then by \eqref{eq4.8} we have
\begin{equation}\label{eq4.9}
|N^-_X(y_1)\cap N^-_X(y_2)|\geq s-2> 3.
\end{equation}
Take a vertex $x\in N^-_X(y_1)\cap N^-_X(y_2)$ with its unique in-neighbor  $w$. Then by \eqref{eq4.9} there exists a vertex $x'\in N^-_X(y_1)\cap N^-_X(y_2)-\{x,w\}$. It follows that
$D$ has an $\overrightarrow{S_{2,1}}=\{wxy_2,x'y_1y_2\}$, a contradiction.

Therefore,  we have  $a(D)< \left\lfloor (n+1)^2 / 4 \right\rfloor $ in this case.

{\it Case 3.} $\Delta^- = s+1$. Let $v$ be a vertex with maximum in-degree. Then by \eqref{eq04.1} we have  $N^-(v)=X\cup \{y_1\}$ for some $y_1\in Y$.
We distinguish two subcases.

{\it Subcase 3.1.} $D[X]$ is not an empty graph.
We choose an arbitrary arc $x_1x_2\in A(D)$ with $x_1,x_2\in X$.
It follows that $N^-(y_1)\subseteq \{x_1,x_2\}$; otherwise a vertex $w\in N^-(y_1)-\{x_1,x_2\}$ leads to an $\overrightarrow{S_{2,1}}=\{x_1x_2v,wy_1v\}$, a contradiction. Since $d^-(y_1)\ge 2$, we have  $N^-(y_1)= \{x_1,x_2\} $, which implies that $a(X)=1$, as
  $x_1x_2$ is chosen arbitrarily in $D[X]$.

Moreover, we have $a(y_1,X)=0$; otherwise if $y_1x_3\in A(D)$ for some $x_3\in X$, then $x_3\not\in \{x_1,x_2\}$ and $D$ contains  an
$\overrightarrow{S_{2,1}}=\{x_1  x_2  v, y_1x_3v\}$, a contradiction.

 Therefore, we have
\begin{equation*}
\begin{split}
    a(D)&=a(X)+a(X,Y)+a(Y,X)+a(Y)\\
    &=a(X)+[a(X,Y\setminus y_1)+a(Y\setminus y_1,X)]+a(Y)+[a(y_1,X)+a(X,y_1)]\\
    &\leq 1+(n-s-1)s+(n-s)+2< \left\lfloor\frac{(n+1)^2}{4}\right\rfloor.
\end{split}
\end{equation*}

{\it Subcase 3.2.} $D[X]$ is  an empty graph.
We  count $a(D)$ as follows:
\begin{equation}\label{eq03.8}
\begin{split}
    a(D)&=a(X)+a(X,Y)+a(Y,X)+a(Y)\\
    &\leq s(n-s)+(n-s)\leq \left\lfloor\frac{(n+1)^2}{4}\right\rfloor.
\end{split}
\end{equation}
Suppose equality in \eqref{eq03.8} holds. Then $s\in \left\{\lfloor (n-1)/2\rfloor, \lceil (n-1)/2\rceil\right\}$,
\begin{equation}\label{eq03.2}
d^-_Y(y)=1   \quad {\rm for~~all}\quad  y\in Y
\end{equation}
and
\begin{equation}\label{eq03.3}
 \quad \mid \{xy,yx\}\cap A(D)\mid=1 \quad {\rm for~~all}\quad x\in X, ~y\in Y.
\end{equation}
Next we prove
\begin{equation}\label{eq03.9}
[Y,X]= \emptyset.
\end{equation}

To the contrary, suppose $[Y,X]\neq \emptyset$. Denote by $X_1=\{v\in X: ~d^-(x)=1\}$. Then we have $X_1\ne \emptyset$ and $N^-(X_1)\subseteq Y$. Moreover, all vertices in $X_1$ has the same in-neighborhood, say, $\{y_1\}$; otherwise $y_1x_1, y_2x_2\in A(D)$ with $x_1,x_2\in X_1$, $y_1,y_2\in Y$ leads to an $\overrightarrow{S_{2,1}}=\{y_1x_1v,y_2x_2v\}$, a contradiction. Therefore, by \eqref{eq03.3}, we have
$$X\subset N^-(y)\quad{\rm for~~all}\quad y\in Y\setminus\{y_1\}.$$
On the other hand, by \eqref{eq03.2}, $D[Y]$ has an arc $y_2y_3$ with $y_2,y_3\in Y\setminus \{y_1\}$.
Take two vertices $x_1\in X_1$ and $x_2\in X\setminus\{x_1\}$. Then $D$ contains an $\overrightarrow{S_{2,2}}=\{y_1x_1y_3,x_2y_2y_3\}$, a contradiction.  Therefore, we have \eqref{eq03.9}.\\

Combining all the above cases, we have $a(D)\le \left\lfloor (n+1)^2/4 \right\rfloor$, with equality only if  $V(D)$ has a  partition $V(D)=X\cup Y$ such that the conditions (i), (ii), (iii) in Theorem \ref{th1.1} hold.

On the other hand,  let $D$ be an oriented graph whose vertex set $V(D)$ can be partitioned as $V(D)=X\cup Y$ such that the conditions (i), (ii), (iii) in Theorem  \ref{th1.1} hold. Then $D$  is $\overrightarrow{S_{2,1}}$-free with $a(D)=\lfloor(n+1)^2/4\rfloor.$

This completes the proof of Theorem \ref{th1.1}.\qed

\subsection{Proof of Theorem \ref{th1.2}}

Suppose $D\in EX_{ori}(n,\overrightarrow{S_{3,1}})$ with $n\ge 40$. We   partition $V(D)=X\cup Y$ with  $$ X=\{u: d^-(u)\leq 2\},\quad Y=\{u: d^-(u)\geq 3\} .$$
Let $s=|X|$.  We need  the following claim.\\

\begin{claim}\label{claim3.1}
For the above $X$ and $Y$, we have 
 \begin{equation}\label{eq2.8}
 d^-_{Y}(v)\leq 2   \quad{\rm for~~ all }\quad v\in V(D)
 \end{equation}
and
   \begin{equation}\label{eq2.9}
   \frac{n}{2}-\sqrt{n}<  |X|< \frac{n}{2}+\sqrt{n}.
   \end{equation}
 \end{claim}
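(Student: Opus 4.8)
The plan is to follow the template of Claim \ref{claim2.1}: first establish the local degree bound \eqref{eq2.8}, then read off the size estimate \eqref{eq2.9} from a global in-degree count. For \eqref{eq2.8} it suffices to consider $v\in Y$, since every $v\in X$ satisfies $d^-_Y(v)\le d^-(v)\le 2$ by the definition of $X$. So suppose for contradiction that some $v\in Y$ has $d^-_Y(v)\ge 3$. Then $v$ has three in-neighbors $u_1,u_2,u_3\in Y$, each of in-degree at least $3$, and hence $d^-(N^-(v))\rhd(3,3,3)$. The case $d^-_Y(v)\ge 4$ is immediately impossible: a fourth in-neighbor in $Y$ would contribute a fourth entry exceeding $1$ to $d^-(N^-(v))$, contradicting Lemma \ref{lemma2.5}, whose conclusion permits at most three such entries. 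Thus only $d^-_Y(v)=3$ remains to be excluded.

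For the remaining case I would apply Lemma \ref{lemma2.5} to fix the local picture: $d^-(u_i)=3$ for $i=1,2,3$, the set $N^-(\{u_1,u_2,u_3\})$ consists of exactly two vertices $w_1,w_2$, and $D$ restricted to $\{v,u_1,u_2,u_3,w_1,w_2\}$ contains the rigid configuration $H_3$ of Figure \ref{fig5}, including the directed triangle on $u_1,u_2,u_3$ and all six arcs $w_j\to u_i$. Because $H_3$ carries no $\overrightarrow{S_{3,1}}$ centered at $v$, I cannot finish by looking at $v$ alone; instead I would recenter at $u_1$, whose in-neighbors are exactly $w_1,w_2,u_3$. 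Taking $w_1,w_2,u_3$ as the three middle vertices, the vertex $u_2$ serves as the leaf feeding $u_3$, so an $\overrightarrow{S_{3,1}}$ centered at $u_1$ appears as soon as $w_1$ and $w_2$ admit two distinct leaves outside $\{u_1,u_2,u_3,w_1,w_2\}$. Hence $w_1$ and $w_2$ cannot both have free in-neighbors; combined with the constraint from Lemma \ref{lemma2.5} that no in-neighbor of $v$ other than $u_1,u_2,u_3$ has in-degree exceeding $1$, this pins the six-vertex block down tightly and should yield a contradiction with the extremality of $D$ — either by a local exchange of arcs that strictly increases $a(D)$ while staying $\overrightarrow{S_{3,1}}$-free, or by showing such a block cannot occur in an $\overrightarrow{S_{3,1}}$-free graph attaining the required size.

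Once \eqref{eq2.8} is in hand, \eqref{eq2.9} follows by counting in-degrees. Writing $s=|X|$ and splitting $d^-(v)=d^-_X(v)+d^-_Y(v)$, I bound $d^-(v)\le 2$ for each of the $s$ vertices of $X$ and $d^-(v)=d^-_X(v)+d^-_Y(v)\le s+2$ for each of the $n-s$ vertices of $Y$, using $d^-_X(v)\le s$ and \eqref{eq2.8}. This gives $a(D)\le 2s+(n-s)(s+2)=-s^2+ns+2n$. On the other hand, $D\in EX_{ori}(n,\overrightarrow{S_{3,1}})$ together with Lemma \ref{lemma02.6} gives $a(D)\ge\lfloor(n+2)^2/4\rfloor>\tfrac{n^2+4n}{4}$, so $s^2-ns+\tfrac{n^2-4n}{4}<0$; solving this quadratic yields $\tfrac{n}{2}-\sqrt{n}<s<\tfrac{n}{2}+\sqrt{n}$, which is \eqref{eq2.9}.

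I expect the decisive obstacle to be the exclusion of the case $d^-_Y(v)=3$ in \eqref{eq2.8}. Lemma \ref{lemma2.5} only rules out four or more in-neighbors of large in-degree and explicitly permits the $(3,3,3,\dots)$ profile, so the block $H_3$ is itself $\overrightarrow{S_{3,1}}$-free when centered at $v$; no direct forbidden-subgraph argument closes the gap. Converting the freedom in the in-neighbors of $w_1,w_2$ and the extremality of $D$ into an actual contradiction is the delicate step — in particular the local surgery is subtle, since one cannot simply rewire the six vertices into an in-degree-$2$-regular block (an in-degree-$2$-regular oriented graph needs at least five vertices). By contrast, the size bound \eqref{eq2.9} is a routine consequence once \eqref{eq2.8} is secured.
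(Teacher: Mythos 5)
Your reduction of \eqref{eq2.8} to the single case $d^-_Y(v)=3$ is sound (a fourth in-neighbor of in-degree at least $3$ does contradict Lemma \ref{lemma2.5}), and your derivation of \eqref{eq2.9} from \eqref{eq2.8} is exactly the paper's computation. But the case $d^-_Y(v)=3$ is left genuinely open. You correctly observe that the configuration $H_3$ is itself $\overrightarrow{S_{3,1}}$-free when centered at $v$, and then gesture at ``a local exchange of arcs'' or ``showing such a block cannot occur'' without producing either. Recentering at $u_1$ does not close the gap: nothing forces $w_1$ and $w_2$ to have any in-neighbors at all, and Lemma \ref{lemma2.5} explicitly permits the profile $(3,3,3,0,\ldots,0)$ in which they have none. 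This is the decisive step of the claim, and it is missing from your argument.

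The paper's resolution is not a forbidden-subgraph argument but a global arc count, and that is the idea you need. Lemma \ref{lemma2.5} pins down the entire in-degree profile of $N^-(v)$, namely $d^-(N^-(v))=(3,3,3,1,0,\ldots,0)$ or $(3,3,3,0,\ldots,0)$, so $\sum_{u\in N^-(v)}d^-(u)\le 10$ no matter how large $d^-(v)$ is. The paper first applies this to a vertex $w$ of maximum in-degree to get $|\{u\in N^-(w):d^-(u)\ge 3\}|\le 2$, since otherwise $a(D)\le 10+\Delta^-(n-\Delta^-)<\lfloor (n+2)^2/4\rfloor$, contradicting the lower bound of Lemma \ref{lemma02.6}; this yields $|X|\ge\Delta^--2$ and $|Y|\le n-\Delta^-+2$. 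Then, for an arbitrary $v$ with $d^-_Y(v)\ge 3$ and $d^-(v)=r$, it bounds $a(D)$ by summing at most $2$ over $X\setminus N^-(v)$, at most $\Delta^-$ over $Y\setminus N^-(v)$ (a set of size at most $n-\Delta^--1$), and at most $10$ over $N^-(v)$, which again falls strictly below $\lfloor (n+2)^2/4\rfloor$ for $n\ge 40$. In short: the $(3,3,3,0,\ldots,0)$ profile is locally admissible but globally too wasteful of arcs for an extremal graph, and it is that quantitative contradiction with extremality, not a local structural one, that finishes the proof.
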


 {\it Proof of Claim \ref{claim3.1}.} For a vertex $w\in V(D)$  with the maximum in-degree, we assert that
\begin{equation}\label{EQ3.16}
\left|\{u\in N^-(w):~d^-(u)\geq 3\}\right|\leq 2.
 \end{equation}
 To the contrary, suppose $|\{u\in N^-(w):d^-(u)\geq 3\}|\geq 3$.
Applying Lemma \ref{lemma2.5}, we have $d^-(N^-(v))=(3,3,3,1,0,0,\ldots ,0)$ or $(3,3,3,0,0,\ldots ,0)$.
 It follows that
  \begin{equation*}
  \begin{split}
    a(D)&\leq \sum_{u\in N^-(w)}d^-(u)+\sum_{u\notin N^-(w)}d^-(u) \leq 10+\sum_{u\notin N^-(w)}d^-(u) \\
    &\leq 10+\Delta^- (n-\Delta^- )  < \left\lfloor \frac{(n+2)^2}{4}\right\rfloor,
  \end{split}
  \end{equation*}
which contradicts \eqref{eq2.5}.

 By  \eqref{EQ3.16}, we have $$|X|\ge \Delta^- -2 \quad{\rm and }\quad |Y|\le n-\Delta^{-}(D)+2 .$$

Suppose  $D$ has  a vertex $v$ such that $d_Y^-(v)\ge 3$, i.e.,   $N^-(v)$ contains $3$ vertices with in-degree $\ge 3$. By \eqref{EQ3.16}, we have $d^-(v)\equiv r<\Delta^- $.
Applying Lemma \ref{lemma2.5}, we get $d^-(N^-(v))=(3,3,3,1,0,\ldots,0)$ or $(3,3,3,0,\ldots,0)$.
It follows that
$$|X-N^-(v)|\ge \Delta^- -2-(r-3)= \Delta^-  - r+1\quad {\rm and}\quad  |Y-N^-(v)|\le n-\Delta^{-}-1 . $$
Therefore, we have
\begin{equation*}
  \begin{split}
    a(D)&=\sum_{u\in X-N^-(v)}d^-(u)+\sum_{u\in Y-N^-(v)}d^-(u)+\sum_{u\in N^-(v)}d^-(u)\\
    &\leq  2(\Delta^- -r+1)+\Delta^- (n-\Delta^{-}-1)+10\\
    &\leq \frac{n^2+2n+49-2r}{4}< \left\lfloor\frac{(n+2)^2}{4}\right\rfloor,
   \end{split}
  \end{equation*}
which contradicts \eqref{eq2.5}. Hence, we have \eqref{eq2.8}.

Since $D \in EX_{ori}(n,\overrightarrow{S_{3,1}})$, by Lemma \ref{lemma02.6}
  we have
\begin{equation*}
  \begin{split}
  \frac{n^2+4n}{4}<\left\lfloor \frac{(n+2)^2}{4}\right\rfloor \leq a(D)&\leq \sum_{v\in X}d_X^-(v)+\sum_{v\in Y}d_Y^-(v)\\
  &\leq 2|X|+(|X|+2)(n-|X|)=-|X|^2+n|X|+2n,
\end{split}
\end{equation*}
which leads to \eqref{eq2.9}.

This completes the proof of Claim \ref{claim3.1}. \\

Suppose $D[X]$ is empty. Notice that $a(X,Y)+a(Y,X)\le s(n-s)$. We have the following bound on the size of $D$:
\begin{equation}\label{eq03.15}
  \begin{split}
  a(D)&\leq a(X)+a(X,Y)+a(Y,X)+a(Y)\leq s(n-s)+2(n-s)\leq \left\lfloor (n+2)^2/4\right\rfloor.
\end{split}
\end{equation}

Suppose   equality in \eqref{eq03.15} holds, then  $s\in \left\{\lfloor n/2\rfloor-1, \lceil n/2\rceil-1\right\}$,
\begin{equation}\label{eq3.2}
d^-_Y(y)=2 \quad  {\rm and}\quad \mid \{xy,yx\}\cap A(D)\mid=1 \quad {\rm for~~all}\quad x\in X, ~y\in Y.
\end{equation}
Next we prove $[Y,X]= \emptyset$.\\

 To the contrary, suppose $[Y,X]\neq \emptyset$. We call $y\in Y$ a {\it bad vertex} if $d^-_X(y)=1,d^+_X(y)=s-1$.
Then  $Y$ contains at most two bad vertices. In fact, if $Y$ has three bad vertices $v_1, v_2,v_3$, then by the definition of bad vertices we have $d^+_X(v_i)=s-1$ for $i=1,2,3$, which leads to $$|N^+_X(v_1)\cap N^+_X(v_2) \cap N^+_X(v_3)|\geq s-3\geq 1.$$
It follows that there is vertex $u\in X$ with $\{v_1,v_2,v_3\}\subseteq N^-(u)$, which contradicts the construction of $X$.

Suppose $Y$ has two bad vertices $v_1,v_2$.
Then $|N^+_X(v_1)\cap N^+_X(v_2)|\geq s-2 \geq 1$.
Take $x'\in N^+_X(v_1)\cap N^+_X(v_2)$ and
 $y'\in \{v_1,v_2\}$. Then $y'x'\in [Y,X]$. Now we prove
 \begin{equation}\label{eq3.3}
 N^+(x')\subseteq  N^+(y').
 \end{equation}

Take  any $y_0\in N^+(x')$.  Since $D[X]$ is empty, we have $y_0\in Y$. By \eqref{eq3.2} we may  denote $N^-_Y(y_0)=\{y_1,y_2\}$. Since $d^-(y_i)\geq 3$,  we have $N^-_X(y_i) \neq \emptyset$ for $i=1,2$.
We assert that
\begin{equation}\label{eq3.4}
N_X^-(y_i)\setminus \{x'\}\ne \emptyset \quad {\rm for}\quad i=1,2.
\end{equation}
In fact, if $N_X^-(y_i)=\{x'\}$ for $i\in \{1,2\}$,  then by \eqref{eq3.2} we have
$N^+_X(y_i)=X\setminus \{x'\}$, which means $y_i$ is a bad vertex, i.e., $y_i\in \{v_1,v_2\}$. It follows that $y_ix'\in A(D)$, which contradicts $N^-_X(y_i)=\{x'\}$.

 Since $d^-_Y(y_1)=d^-_Y(y_2)=2$, $D$ has a matching $M_1=\{y_3y_1,y_4 y_2\}$ with $y_1,y_2,y_3,y_4\in Y$.  Take $x_i\in N_X^-(y_i)\setminus \{x'\}$ for $i=1,2$, where $x_1$ and $x_2$ may be identical.
Then $M_2=\{x_1 y_1,y_4 y_2\},M_3=\{y_3y_1,x_2y_2\}$ are matchings in $D$.

Since $D$ is $\overrightarrow{S_{3,1}}$-free,  for every matching $M=\{w_1y_1,w_2y_2\} $ in $D$
 with $x'\not\in\{w_1,w_2\}$, we have $y'\in V(M)=\{w_1,w_2,y_1,y_2\}$.
Therefore,
 $$y'\in V(M_1)\cap V(M_2)\cap V(M_3)=\{y_1,y_2\}=N^-_Y(y_0).$$
Hence, we get $y_0\in N^+(y')$ and  \eqref{eq3.3} holds.

Since $d^-(x')\leq 2$ and $d^-_Y(x')+d^+_Y(x')=n-s$, we have $d^+_Y(x')\geq n-s-2$. Notice that \eqref{eq3.3} implies $d^+_Y(y')\ge d^+_Y(x')$. Since $D$ is an oriented graph and $d^-_Y(y')+d^+_Y(y')\leq |Y|-1=n-s-1$, we have $d^-_Y(y')\leq 1$, which contradicts
 $d^-_Y(y)=2$.

If $Y$ contains one    bad  vertex $v$, take
$y'=v$ and $x'\in N^+(v)$; if $Y$ contains no bad vertex, choose any $x'\in X$ and $y'\in Y$ with $y'x'\in A(D)$.  Applying the same arguments as above we can deduce a contradiction, noticing that \eqref{eq3.4} is obvious when $Y$ contains no bad vertex.

Therefore, we have $$[Y,X]= \emptyset\quad {\rm and}\quad  [X,Y]=\{uv, u\in X, v\in Y\}.$$ Hence, $D$ is an oriented graph satisfying the conditions (i), (ii) and (iii) in Theorem \ref{th1.2}. \\

On the other hand, any oriented graph $D$  satisfying the conditions (i), (ii) and (iii) in Theorem \ref{th1.2} is $\overrightarrow{S_{3,1}}$-free with $\lfloor(n+2)^2/4\rfloor$ arcs.  Therefore,  it suffices to confirm   $a(D)<\left\lfloor (n+2)^2/4\right\rfloor$ when $D[X] $ is not empty. By Claim \ref{claim3.1} we have $\Delta^- \leq s+2.$ We distinguish three cases.\\

 {\it Case 1.} $\Delta^- \leq s$. We count the size of $D$ as follows:
\begin{equation}\label{eq3.5}
\begin{split}
    a(D) \leq \sum_{x\in X}d^-(x)+\sum_{y\in Y}d^-(y)
     \leq  2s+(n-s)s \leq \left\lfloor (n+2)^2/4\right\rfloor.
\end{split}
\end{equation}

Suppose equality in \eqref{eq3.5} holds. Then $s\in \left\{\lfloor n/2\rfloor+1, \lceil n/2\rceil+1\right\}$,
\begin{equation}\label{eq3.6}
  \begin{split}
  d^-(x)=2  \quad {\rm and}\quad d^-(y)=s \quad {\rm for~~all }\quad x\in X, y\in Y.
\end{split}
\end{equation}
We assert that  $D[Y]$ is   an empty graph. To the contrary, choose any arc $y_1y_0\in A(D)$ with $y_0,y_1\in Y$.
Since $d^-(y_0)=s$ and $d^-_Y(y_0)\leq 2$, we have  $$d^-_X(y_0)\geq s-2 \quad{\rm and}\quad |X-N^-(y_0)|\leq 2.$$
If $|X\cap N^-(y_0)-N^+(y_1)|\leq 1$,
then we have
\begin{equation*}
   |X\cap N^+(y_1)| \geq |X\cap N^-(y_0)\cap N^+(y_1)|\geq |X\cap N^-(y_0)|-1
  = |X|-|X-N^-(y_0)|-1\geq s-3.
\end{equation*}
Since $D$ is an oriented graph and $d^-_Y(y_1)\leq 2$, we have
\begin{equation}
   |X\cap N^-(y_1)|\leq |X|-|X\cap N^+(y_1)|\leq 3
\end{equation}
and
\begin{equation}
  d^-(y_1)=d^-_X(y_1)+d^-_Y(y_1)=|X\cap N^-(y_1)|+d^-_Y(y_1)\leq 5
\end{equation}
which contradicts $d^-(y_1)=s \geq 14$ by \eqref{eq2.9}. Therefore, we have  $$|X\cap N^-(y_0)-N^+(y_1)|\geq 2.$$
Choose  $x_1,x_2\in X\cap N^-(y_0)-N^+(y_1)$. Then $d^-(x_1)=d^-(x_2)=2$ and
we can find a  matching $M=\{w_1 x_1,w_2x_2\}$ with $y_1\notin \{w_1,w_2\}$.
Since $D$ is $\overrightarrow{S_{3,1}}$-free, we have $N^-(y_1)\subseteq \{w_1,w_2,x_1,x_2\}$  and $d^-(y_1)\leq 4$, which contradicts $d^-(y_1)=s\geq 14$.
Therefore,  $D[Y]$ is an empty graph.

By \eqref{eq3.6}, we have
\begin{equation}\label{eq3.9}
N^-(y)=X \quad {\rm for~~ all}\quad y\in Y,
\end{equation} which leads to $a(Y,X)=0$.
Hence, given any 2-matching $M$ in $D[X]$, we have
\begin{equation}\label{eq3.10}
N^-(v)\subset V(M) \quad{\rm for~~ all}\quad v\in X\setminus V(M),
\end{equation}
since otherwise we can find an $\overrightarrow{S_{3,1}}$ in $D$.

By \eqref{eq3.6} and \eqref{eq3.9}, $D$ contains a matching $M_0=\{x_1  x_2,x_3 x_4\}$ with $\{x_1,x_2,x_3,x_4\}\subset X$.
Since $d^-(x_1)=d^-(x_3)=2$, there exists a vertex $x_5\in X\setminus \{x_1,x_2,x_3,x_4\}$ such that either  $x_5x_1\in A(D)$  or $x_5x_3\in A(D)$.  Without loss of generality, we assume
$x_5x_3\in A(D)$.

By \eqref{eq3.6} and \eqref{eq3.10}, we have $|N^-(x_5)|=2$ and $N^-(x_5)\subset \{x_1,x_2,x_3,x_4\}$. We can deduce  contradictions for all the six choices of $N^-(x_5)$.
We take  $N^-(x_5)=\{x_1,x_2\}$ for example, as the argument for other cases are similar. Note that  $D$ has four matchings $M_1=\{x_1 x_2,x_3 x_4\},M_2=\{x_1 x_2,x_5 x_3\},M_3=\{x_1 x_5,x_3 x_4\},M_4=\{x_2 x_5,x_3 x_4\}$. By \eqref{eq3.10}, we have $N^-(u)\subset V(M_1)\cap V(M_2)\cap V(M_3)\cap V(M_4)=\{x_3\}$ for all $u\in X\setminus\{x_1,x_2,x_3,x_4,x_5\},$ which contradicts  \eqref{eq3.6} as $|X|>5$.

 Therefore, we have  $a(D) <\left\lfloor (n+2)^2/4\right\rfloor $  in this  case.\\

 {\it Case 2.} $\Delta^- =s+2$.
Choose a vertex  $y_0\in Y$ with maximum in-degree. By \eqref{eq2.8} we have   $N^-(y_0)=X\cup \{y_1,y_2\}$ form some $y_1,y_2\in Y$. Recall that $d^-(y_i)\ge 3$ for $i=1,2$. We distinguish three subcases.

 {\it Case 2.1.} $d^-(y_1)\geq 4$ and $d^-(y_2)\geq 4$.
Since  $D[X]$ is not an empty graph, we choose an arc $x_1x_2\in A(D)$ with $x_1,x_2\in X$.
Then  we have  $\widetilde{d}^-(y_1,y_2, x_2)\rhd (3,2,1)$.
Applying  Lemma \ref{lemma2.2},
$D$ contains a $\overrightarrow{S_{3,1}}$   centered at $y_0$, a contradiction.

 {\it Case 2.2.} One of $\{y_1,y_2\}$ has in-degree 3, and the other one has in-degree larger than 3, say, $d^-(y_1)=3$ and $d^-(y_2)\geq 4$.
If there is a vertex $ x\in X$ such that $ N^-(x)-\{y_1,y_2\}\neq \emptyset$ and $xy_1\notin A(D)$,
then $\widetilde{d}^-(y_1,y_2, x)\rhd (3,2,1)$.
Applying  Lemma \ref{lemma2.2},
$D$ contains an $\overrightarrow{S_{3,1}}$   centered at $y_0$, a contradiction. Therefore, for any $x\in X$, we have either
\begin{equation}\label{eq3.12}
N^-(x)\subseteq \{y_1,y_2\}
\end{equation}
or
\begin{equation}\label{eq3.13}
  xy_1\in A(D).
  \end{equation}

 Since $d^-(y_1)=3$,  $X$  has at most 3 vertices satisfying \eqref{eq3.13}. Hence, $X$ has at most 3 vertices  such that \eqref{eq3.12} does not hold, which leads to $a(X)\leq 6$.

 Since $D[X]$ is not empty, there is an arc $x_1x_2\in A(D)$ with $x_1,x_2\in X$. Since $d^-(y_2)\ge 4$, there is a vertex $w_1\in N^-(y_2)- \{x_1,x_2\}$. It follows that
 \begin{equation}\label{eq3.14}
 a(y_1,X)+a(X,y_1)\le 3;
  \end{equation}
  otherwise, there is a vertex $x_3\in N^+_X(y_1)\cup N^-_X(y_1)-\{x_1,x_2,w_1\}$, which leads to an $\overrightarrow{S_{3,1}}$ centered at $y_0$ as the following figures shows, a contradiction.
\begin{figure}[h]
    \centering
    \begin{minipage}{0.5\textwidth}
    \centering
    \begin{tikzpicture}
      \filldraw (0,1) circle (0.1);
      \filldraw (0,0) circle (0.1);
      \filldraw (0,-1) circle (0.1);
      \filldraw (2,0) circle (0.1);
      \filldraw (1,1) circle (0.1);
      \filldraw (1,0) circle (0.1);
      \filldraw (1,-1) circle (0.1);
      \draw[-latex] (0,1)--(0.9,1);
      \draw[-latex] (0,0)--(0.9,0);
      \draw[-latex] (0,-1)--(0.9,-1);
      \draw[-latex] (1,1)--(1.9,0.1);
      \draw[-latex] (1,0)--(1.9,0);
      \draw[-latex] (1,-1)--(1.9,-0.1);
      \node[left] at (0,1) {$x_1$};
      \node[above] at (1,1) {$x_2$};
      \node[left] at (0,0) {$w_1$};
      \node[above] at (1,0) {$y_2$};
      \node[left] at (0,-1) {$x_3$};
      \node[above] at (1,-1) {$y_1$};
      \node[right] at (2,0) {$y_0$};
    \end{tikzpicture}
    \caption{$x_3\in N^-( y_1)$}
    \label{fig:}
    \end{minipage}%
    \begin{minipage}{0.5\textwidth}
    \centering
    \begin{tikzpicture}
      \filldraw (0,1) circle (0.1);
      \filldraw (0,0) circle (0.1);
      \filldraw (0,-1) circle (0.1);
      \filldraw (2,0) circle (0.1);
      \filldraw (1,1) circle (0.1);
      \filldraw (1,0) circle (0.1);
      \filldraw (1,-1) circle (0.1);
      \draw[-latex] (0,1)--(0.9,1);
      \draw[-latex] (0,0)--(0.9,0);
      \draw[-latex] (1,-1)--(0.1,-1);
      \draw[-latex] (1,1)--(1.9,0.1);
      \draw[-latex] (1,0)--(1.9,0);
      \draw[-latex] (0,-1)--(1.9,-0.1);
      \node[left] at (0,1) {$x_1$};
      \node[above] at (1,1) {$x_2$};
      \node[left] at (0,0) {$w_1$};
      \node[above] at (1,0) {$y_2$};
      \node[left] at (0,-1) {$x_3$};
      \node[above] at (1,-1) {$y_1$};
      \node[right] at (2,0) {$y_0$};
    \end{tikzpicture}
    \caption{$ x_3\in N^+( y_1)$}
    \label{fig:}
    \end{minipage}
\end{figure}

Therefore,   we can count $a(D)$ as  follows:
\begin{equation}\label{eq3.15}
\begin{split}
    a(D)&=a(X)+a(X,Y)+a(Y,X)+a(Y)\\
    &=a(X)+[a(X,Y\setminus y_1)+a(Y\setminus y_1,X)]+[a(X,y_1)+a(y_1,X)]+a(Y)\\
    &\leq 6+(n-s-1)s+3+2(n-s)\\
    & <\left\lfloor\frac{(n+2)^2}{4} \right\rfloor.
\end{split}
\end{equation}

 {\it Case 2.3.} $d^-(y_1)=d^-(y_2)=3$.
If there is a vertex $ x\in X$ such that $x\not\in  N^-(y_1)\cup N^-(y_2)$ and  $ N^-(x)-\{y_1,y_2\}\neq \emptyset$,
then $\widetilde{d}^-(y_1,y_2, x)\rhd (3,2,1)$.
Applying  Lemma \ref{lemma2.2},
$D$ contains a $\overrightarrow{S_{3,1}}$   centered at $y_0$, a contradiction.
Therefore, for any $ x\in X$, we have either \eqref{eq3.12} or
\begin{equation}\label{eq3.016}
x \in  N^-(y_1)\cup N^-(y_2).
\end{equation}

Since $|N^-(y_1)\cup N^-(y_2)|\leq 6$, $X$ has at most 6 vertices
that do not satisfying \eqref{eq3.12}, which leads to $a(X)\leq 12$.

Again, since $D[X]$ is not empty, there is an arc $x_1x_2\in a(D)$ with $x_1,x_2\in X$.
Notice that $d^-(y_1)=d^-(y_2)=3$ and $D$ is an oriented graph, we have $$N^-(y_1)-\{x_1,x_2,y_2\}\neq \emptyset  \quad {\rm or}\quad   N^-(y_2)-\{x_1,x_2,y_1\}\neq \emptyset.$$
Without loss of generality, we assume  $N^-(y_2)-\{x_1,x_2, y_1\}\neq \emptyset$ contains a vertex $w_1$. Then similarly as in Case 2.2, we have \eqref{eq3.14}.

 Now using the same calculation as in \eqref{eq3.15} we have $a(D) <\left\lfloor (n+2)^2/4\right\rfloor $.\\

 {\it Case 3.} $\Delta^- =s+1$. We distinguish two subcases.

 {\it Subcase 3.1.} There exists a vertex $y_0\in Y$ with in-degree $\Delta^- $ such that   $d^-_X(y_0)=s-1$ and  $d^-_Y(y_0)=2$. Let $N^-_Y(y_0)=\{y_1,y_2\}$ and $X-N^-_X(y_0)=\{x_0\}$. Denote by $X'=N^-_X(y_0)$. We distinguish three subcases.

  {\it Subcase 3.1.1.} $d^-(y_1)\geq 4,d^-(y_2)\geq 4$. If there is a vertex  $ x \in X' $ such that $N^-(x)-\{y_1,y_2\}\neq \emptyset$, then $\widetilde{d}^-(y_1,y_2, x)\rhd (3,2,1)$.
Applying Lemma \ref{lemma2.2}, $D$ contains an $\overrightarrow{S_{3,1}}$,  a contradiction. Therefore, we have
 \begin{equation}\label{eq3.17}
 N^-(x)\subseteq \{y_1,y_2\} \quad{\rm for ~~all}\quad  x\in X',
 \end{equation}
 which leads to
$$\sum_{x\in X'}d^-(x)\leq d^+_{X'}(y_1)+d^+_{X'}(y_2).$$
Notices that
$$d^+_{X'}(y_i)+d^-(y_i)\le d^+_X(y_i)+d^-_X(y_i)+d^-_Y(y_i)\le s+2\quad {\rm for}\quad i=1,2.$$
We have
\begin{equation}\label{eq3.16}
\begin{split}
    a(D)&=d^-(x_0)+\sum_{x\in X'}d^-(x)+d^-(y_1)+d^-(y_2)+\sum_{y\in Y\setminus \{y_1,y_2\}}d^-(y)\\
    &\leq d^-(x_0)+[d^+_{X'}(y_1)+d^-(y_1)]+[d^+_{X'}(y_2)+d^-(y_2)]+\sum_{y\in Y\setminus \{y_1,y_2\}}d^-(y)\\
    &\leq 2+2(s+2)+(s+1)(n-s-2)\\
    &<  \left\lfloor\frac{(n+2)^2}{4} \right\rfloor.
\end{split}
\end{equation}

 {\it Subcase 3.1.2.}~~ One of $\{y_1,y_2\}$ has in-degree 3, and the other one has in-degree larger than 3, say, $d^-(y_1)=3$ and $d^-(y_2)\geq 4$. Using the same arguments as in Case 2.2, for every $x\in X'$, we have either \eqref{eq3.12} or  \eqref{eq3.13}.  Since $d^-(y_1)=3$,  $X'$ has at most 3 vertices  such that \eqref{eq3.12} does not hold, which leads to $\sum_{x\in X'}d^-_X(x)\leq 6$. It follows that $$a(X)\le\sum_{x\in X'}d^-_X(x)+d^-(x_0)\leq 8.$$

 If \eqref{eq3.12} holds for all $x\in X'$, using the same calculation as in Subcase 3.1.1 we have  $a(D)<\lfloor (n+2)^2/4\rfloor.$
If  there is a vertex $x_1\in X'$ such that $N^-(x_1)\not\subseteq \{y_1,y_2\}$, i.e., $x_1$ has an in-neighbor $w_1\in V(D)\setminus\{y_1,y_2\}$, then $d^-(y_2)\ge 4$ implies that $y_2$ has an in-neighbor $w_2$ from $V(D)\setminus\{x_1,y_1,w_1\}$. It follows that
\begin{equation}\label{eq03.32}
a(X,y_1)+a(y_1,X)\le 4;
\end{equation}
otherwise there is a vertex $x_2\in X'-\{x_1,w_1,w_2\}$ such that either $x_2y_1\in A(D)$ or $y_1x_2\in A(D)$, which both guarantee  an $\overrightarrow{S_{3,1}}$ in $D$, a contradiction.
Therefore, similarly as in \eqref{eq3.15}, we have
\begin{equation*}
\begin{split}
    a(D)
    &=a(X)+[a(X,Y\setminus y_1)+a(Y\setminus y_1,X)]+[a(X,y_1)+a(y_1,X)]+a(Y)\\
    &\leq 8+s(n-s-1)+4+2(n-s) \\
    &<  \left\lfloor\frac{(n+2)^2}{4} \right\rfloor.
\end{split}
\end{equation*}

{\it Subcase 3.1.3.} $d^-(y_1)=d^-(y_2)=3$. Then similarly as in Subcase 2.3, for any
$  x\in X'$, we have either \eqref{eq3.12} or \eqref{eq3.016}.
If \eqref{eq3.12} holds for all $x\in X'$, then
we have \eqref{eq3.16} again.

If  there is a vertex $x_1\in X'$ such that $N^-(x_1)\not\subseteq \{y_1,y_2\}$, i.e., there is a vertex $x_1\in X'$ which has an in-neighbor $w_1\in V(D)\setminus\{y_1,y_2\}$, then $d^-(y_1)=d^-(y_2)=3$ implies that at least one of  $\{y_1,y_2\}$, say $y_2$, has an in-neighbor $w_2$ from $V(D)\setminus\{x_1,y_1,y_2,w_1\}$. Similarly as in Subcase 3.1.2, we have \eqref{eq03.32}.
On the other hand, since $|N^-(y_1)\cup N^-(y_2)|\leq 6$, $X'$ has at most 6 vertices $x$   such that $N^-(x)\not\subseteq \{y_1,y_2\}$, which leads to
$\sum_{x\in X'}d^-_X(x)\leq 12$ and $a(X)=\sum_{x\in X}d^-_X(x)\leq 14$. Therefore, similarly as in \eqref{eq3.15}, we have
\begin{equation*}
\begin{split}
    a(D)
    &=a(X)+[a(X,Y\setminus y_1)+a(Y\setminus y_1,X)]+[a(X,y_1)+a(y_1,X)]+a(Y)\\
    &\leq  14+s(n-s-1)+4+2(n-s)\\
    &<  \left\lfloor\frac{(n+2)^2}{4} \right\rfloor.
\end{split}
\end{equation*}

We remark that we need $n\ge 40$ to guarantee the above inequality.

 {\it Subcase 3.2.}  $d^-_X(y)=s,d^-_Y(y)=1$ for all $y\in Y$ with $d^-(y)=\Delta^- .$  Let $y_0\in Y$ be a vertex with maximum in-degree and
 $N^-(y_0)=X\cup \{y_1\}$ with $y_1\in Y$.
 Let $E=\{ux\in A(D):~x\in X, u\neq y_1\}$.
 We distinguish two subcase.

{\it Subcase 3.2.1.}  $D[E]$  contains a 2-matching $M_0=\{w_1x_1,w_2x_2\}$.
Since $D$ is $\overrightarrow{S_{3,1}}$-free and $N^-(y_0)=X\cup\{y_1\}$, for every 2-matching $M=\{u_1v_1,u_2v_2\}$ with $v_1,v_2\in X$, we have
\begin{equation}\label{eq3.24}
N^-(v)\subseteq V(M)\quad {\rm for~~all}\quad v\in X\cup\{y_1\}-V(M).
\end{equation}

We claim that
\begin{equation}\label{eq3.21}
|\{u\in X:~d^-(u)=2\}|\leq 4.
\end{equation}
Otherwise, we have a vertex $x_3\in X-\{w_1,w_2,x_1,x_2\}$ with $d^-(x_3)=2$.
By \eqref{eq3.24} we have $N^-(x_3)\subseteq \{w_1,w_2,x_1,x_2\}$. For each of the six choices of $N^-(x_3)$, we can always deduce a contradiction.
We take $N^-(x_3)=\{w_1,x_1\}$ as example, as the arguments for   other cases are similar.
Note that $D$ contains three matchings $M_0=\{w_1x_1,w_2x_2\}$, $M_1=\{w_1x_3,w_2x_2\}$, $M_2=\{x_1x_3,w_2x_2\}$.
By \eqref{eq3.24}, we have $N^-(y_1)\subseteq V(M_0)\cap V(M_1)\cap V(M_2)=\{w_2,x_2\}$,
which contradicts $y_1\in Y=\{u\in V(D):~d^-(u)\geq 3\}$.

 Similarly to the argument for \eqref{eq3.14}, we have
 $a(X,y_1)+a(y_1,X)\leq 4$,~which~implies~$d^-(y_1)\leq 6.$
 Therefore,   by \eqref{eq3.21} we have
\begin{equation*}
\begin{split}
    a(D)&=\sum_{u\in X}d^-(u)+\sum_{u\in Y\setminus y_1}d^-(u)+d^-(y_1)\\
    &\leq s+4+(s+1)(n-s-1)+6 \\
    & <  \left\lfloor\frac{(n+2)^2}{4} \right\rfloor.
\end{split}
\end{equation*}

{\it Subcase 3.2.2.}~~$D[E]$ contains no 2-matching.  By the construction of $E$ we have
\begin{equation}\label{Eq4.24}
 \sum_{x\in X}d^-(x)=|E|+d^+_X(y_1).
 \end{equation}
 Since $d^-_Y(y_1)\le 2$, we have
 \begin{equation}\label{Eq4.25}
 d^+_X(y_1)+d^-(y_1)\le s+2.
 \end{equation}

 We assert that
 \begin{equation}\label{Eq4.26}
\begin{split}
    |E|+\sum_{u\in Y\setminus y_1}d^-(u)\le s-1+(s+1)(n-s-1).
\end{split}
\end{equation}
 If  $|E|\leq s-1$, the assertion is obvious. Suppose $|E|\ge s$. Since $s\ge 14$ and $D[E]$ contains no 2-matching,   the underlying graph of $D[E]$ is a star. Moreover, since  $d^-(u)\le 2$ for all $u\in X$, $E$ contains at most 2 vertices from $Y$, which leads to $|E|\le s+1$.
If $|E|=s$, then $E$ covers at least one vertex $y'\in Y\setminus y_1$, which implies $y'$ is an in-neighbor of some vertex in $X$. Therefore, we have $X\not\subset N^-(y')$ and $d^-(y')<\Delta^- $. If $|E|=s+1$, then $E$ covers   two vertices $v_1,v_2\in Y\setminus y_1$, which implies   $X\not\subset N^-(v_i)$ and $d^-(v_i)<\Delta^- $ for $i=1,2$. In both cases we  have \eqref{Eq4.26}.

By \eqref{Eq4.24},\eqref{Eq4.25} and \eqref{Eq4.26} we have
 \begin{equation*}
\begin{split}
    a(D)&=\sum_{u\in X}d^-(u)+\sum_{u\in Y}d^-(u) =|E|+\sum_{u\in Y\setminus y_1}d^-(u)+d^+_X(y_1)+d^-(y_1)\\
    &\le s-1+(s+1)(n-s-1)+s+2 <\left\lfloor\frac{(n+2)^2}{4} \right\rfloor.
\end{split}
\end{equation*}

This completes the proof of Theorem \ref{th1.2}. \qed

\subsection{Proof of Theorem \ref{th1.3}}
By Lemma \ref{lemma02.6} we get the lower bound. We prove the upper bound.
Suppose $D$ is an $\overrightarrow{S_{k,1}}$-free oriented graph of order at least $3k+1$ with $k\ge 4$. We assert that
\begin{equation}\label{eq4.1}
|\{u\in N^-(v):~d^-(u)\ge 2k-1\}|< k\quad {\rm for~~all}\quad v ~~{\rm with}~~d^-(v)\ge k.
\end{equation}
To the contrary,  suppose $v$ is a vertex with in-degree at least $k$ and $S=\{u_1,\ldots,u_k\}\subseteq \{u\in N^-(v):~d^-(u)\ge 2k-1\}$.  Then $d^-(S)\rhd (2k-1,2k-1,\ldots,2k-1)$. Since $D$ is an oriented graph, we have
\begin{equation*}
d_{S}^-(u_i)\leq k-1 \quad{\rm  for \quad} i=1,\ldots,k,
 \end{equation*}
which leads to   $\widetilde{d}^-(u_1,u_2,\ldots,u_k)\rhd (k,k-1,\ldots,1)$.
 Applying Lemma \ref{lemma2.2} we get \eqref{eq4.1}.

Now we partition $V(D)$ as $V(D)=X\cup Y$ with $X=\{u:~d^-(u)\leq 2k-2\}, Y=\{u:~d^-(u)\geq 2k-1\}$. Let $s=|X|$. Then by \eqref{eq4.1} we have
$$d^-_Y(u)\le k-1\quad {\rm for~~all}\quad u\in Y,$$
which leads to
$$d^-(u)\le s+k-1\quad {\rm for~~all}\quad u\in Y.$$
It follows that
\begin{eqnarray*}
a(D)=\sum_{x\in X}d^-(x)+\sum_{y\in Y}d^-(y)\le (2k-2)s+(s+k-1)(n-s)\le \left\lfloor\frac{(n+k-1)^2}{4}\right\rfloor+(k-1)n.
\end{eqnarray*}

This completes the proof.\qed\\

{\it Remark.} It would be interesting to determine the precise value of $ex_{ori}(n,\overrightarrow{S_{k,1}})$ and the extremal digraphs for $n\ge 4$. We leave this for future work.

\section*{Acknowledgement}
 This work was supported by the National Natural Science Foundation of China (No. 12171323) and Guangdong Basic and Applied Basic Research
Foundation (No. 2022A1515011995).


\begin{thebibliography}{WWW}


\bibitem{BGHS} J.C. Bermond, A. Germa, M.C. Heydemann, D. Sotteau, Girth in digraphs, J. Graph Theory 4 (1980) 337-341.
\bibitem{BB} B. Bollob\'as, Extremal graph theory. Handbook of combinatorics, Vol. 1, 2, 1231-1292, Elsevier Sci. B. V., Amsterdam, 1995.
\bibitem{BES} W.G. Brown, P. Erd\H{o}s, M. Simonovits, Extremal problems for directed graphs, J. Combin. Theory Ser. B 15 (1973) 77-93.
\bibitem{BES2} W.G. Brown, P. Erd\H{o}s, M. Simonovits, Inverse extremal digraph problems. Finite and infinite sets, Vol. I, II (Eger, 1981), 119-156,
Colloq. Math. Soc. J\'anos Bolyai, 37, North-Holland, Amsterdam, 1984.
\bibitem{BES3}  W.G. Brown, P. Erd\H{o}s, M. Simonovits, Algorithmic solution of extremal digraph problems, Trans. Amer. Math. Soc. 292 (1985) 421-449.
\bibitem{BH} W.G. Brown, F. Harary, Extremal digraphs. Combinatorial theory and its applications, I-III (Proc. Colloq., Balatonf$\ddot{\rm u}$red, 1969), pp. 135-198,
Colloq. Math. Soc. J\'anos Bolyai, 4, North-Holland, Amsterdam-London, 1970.
\bibitem{CC} B. Chen, A. Chang,
Turán number of 3-free strong digraphs with out-degree restriction, Discrete Appl. Math. 314 (2022) 252-264.
\bibitem{CC2} B. Chen, A. Chang, 3-free strong digraphs with the maximum size, Graphs Combin. 37 (2021) 2535-2554.
\bibitem{HT} R. H$\ddot{\rm a}$ggkvist, C. Thomassen, On pancyclic digraphs, J. Combin. Theory  Ser. B 20 (1976) 20-40.
\bibitem{HDT}  K. Howalla, A.N. Dabboucy, R. Tout, On the maximum number of arcs in some classes of graphs, $\check{\rm C}$asopis P$\check{\rm e}$st. Mat. 107 (1982) 388-392.
\bibitem{HDT2}  K. Howalla, A.N. Dabboucy, R. Tout, An extremal problem for some classes of oriented graphs, $\check{\rm C}$asopis P$\check{\rm e}$st. Mat. 108 (1983) 53-69.
\bibitem{HL1}  Z. Huang, Z. Lyu, 0-1 matrices with zero trace whose squares are 0-1 matrices, Linear Algebra Appl. 565 (2019) 156-176.
\bibitem{HL2}  Z. Huang, Z. Lyu, Extremal digraphs avoiding an orientation of $C_4$, Discrete Math. 343 (2020) 111827.
\bibitem{HL3}  Z. Huang, Z. Lyu, 0-1 matrices whose $k$-th powers have bounded entries, Linear Multilinear Algebra 68 (2020) 1972-1982.
\bibitem{HL4}  Z. Huang, Z. Lyu, P. Qiao, A Tur\'an problem on digraphs avoiding distinct walks of a given length with the same endpoints, Discrete Math. 342
(2019) 1703-1717.
\bibitem{HZ} Z. Huang, X. Zhan, Digraphs that have at most one walk of a given length with the same endpoints, Discrete Math. 311 (2011) 70-79.
\bibitem{ZL1}  Z. Lyu, Digraphs that contain at most $t$ distinct walks of a given length with the same endpoints, J. Comb. Optim. 41 (2021) 762-779.
\bibitem{ZL2}  Z. Lyu, A note on extremal digraphs containing at most $t$ walks of length $k$ with the same endpoints, Graphs Combin. 38 (2022) 33.
\bibitem{ZL3}  Z. Lyu, Extremal digraphs avoiding distinct walks of length 4 with the same endpoints, Discuss. Math. Graph Theory 42 (2022) 985-1004.
\bibitem{ZL4}  Z. Lyu, Extremal digraphs avoiding an orientation of the diamond, Graphs Combin. 37 (2021) 1373-1383.
\bibitem{WM} W. Mantel, Problem 28, Wiskundige Opgaven 10 (1907) 60-61.
\bibitem{MRT}  S.B. Maurer, I. Rabinovitch, W.T. Trotter, A generalization of Tur\'an's theorem to directed graphs, Discrete Math. 32 (1980) 167-189.
\bibitem{VN} V. Nikiforov, Some new results in extremal graph theory. Surveys in combinatorics 2011, 141-181, London Math. Soc. Lecture Note Ser., 392, Cambridge Univ. Press, Cambridge, 2011.
\bibitem{TES}  J. Tuite, G. Erskine, N. Salia, Tur\'an problems for $k$-geodetic digraphs, Graphs Combin. 39 (2023) 25.
\bibitem{PT} P. Tur\'an, Eine Extremalaufgabe aus der Graphentheorie, (Hungarian) Mat. Fiz. Lapok 48
(1941) 436-452.
\bibitem{DW}  D.B. West, Introduction to graph theory. Prentice Hall, Inc., Upper Saddle River, NJ, 1996.
\bibitem{HW} H. Wu, On the 0-1 matrices whose squares are 0-1 matrices, Linear Algebra Appl. 432 (2010) 2909-2924.
\bibitem{ZL}  W. Zhou, B. Li, The Tur\'an number of directed paths and oriented cycles. Graph Combin. 39 (2023) 47.
\end{thebibliography}
\end{document}